\newtheorem{proposition}{Proposition}[section]
\newtheorem{theorem}{Theorem}[section]
\newtheorem{lemma}{Lemma}[section]
\newtheorem*{remark}{Remark}
\title{The SQG Equation as a Geodesic Equation}
\author{Pearce Washabaugh}
\begin{document}

\maketitle

\begin{abstract}

We demonstrate that the surface quasi-geostrophic (SQG) equation given by
$$\theta_t + \left<u, \nabla \theta\right>= 0,\;\;\; \theta = \nabla \times (-\Delta)^{-1/2} u,$$
 is the geodesic equation on the group of volume-preserving diffeomorphisms of a Riemannian manifold $M$ in the right-invariant $\dot{H}^{-1/2}$ metric. We show by example, that the Riemannian exponential map is smooth and non-Fredholm, and that the sectional curvature at the identity is unbounded of both signs.

\end{abstract}

\section{Introduction}
As discussed by Choi et al. \cite{Choi14}, there is a large number of model equations of the full 3D Euler equations  that have been investigated analytically. Some of these equations arise naturally as geodesic equations of right-invariant metrics of diffeomorphism and volume-preserving diffeomorphism (volumorphism) groups. For example, a special case of the generalized Constantin-Lax-Majda model first discussed by Okamoto et al. \cite{Okamoto08} is the Wunsch equation \cite{Wunsch10}, 

\begin{equation}\label{Wunsch}
\omega_t  + u\omega_x +2\omega u_x=0,\;\;\; \omega = Hu_x,
\end{equation}

which is the geodesic equation on the diffeomorphism group of the circle in the $\dot{H}^{1/2}$ right-invariant metric. 
For equations arising in such a fashion, it is then natural to investigate their associated geometric properties in the manner initiated by V. Arnold \cite{Arnold66}. In this paper, we demonstrate that the well known surface quasi-geostrophic (SQG) equation is the geodesic equation on the volumorphism group of a 2D manifold in the $\dot{H}^{-1/2}$ inner product. The SQG equation on a Riemannian manifold $M$ with metric $\left<\cdot ,\cdot \right>$ is given by

\begin{equation}\label{SQG}
\theta_t + \left<u, \nabla \theta\right>= 0,\;\;\; u = \mathcal{R}^\perp \theta,
\end{equation}

where $ \mathcal{R}^\perp$ is the perpendicular Riesz transform. Many of the basic mathematical properties of this equation were first investigated by Constantin-Majda-Tabak \cite{Constantin94}. Importantly, while this equation is known to have solutions for short time, the global in time existence problem is still open. It is believed by some (see e.g. Constantin-Majda-Tabak \cite{Constantin94}) that the blow-up mechanism (should it exist) of this equation may have very similar properties to that of the full 3D Euler equations. As Bauer-Kolev-Preston \cite{Bauer15} did for the Wunsch equation \eqref{Wunsch}, in this paper we investigate some of the basic geometric properties of the SQG equation \eqref{SQG}. We perform the necessary computations in a variety of domains in order to keep the paper as simple as possible. Looking forward, it will be necessary to firmly establish the theory of this equation in a single domain (as in Escher-Kolev \cite{Escher14} for positive fractional order Sobolev metrics on the diffeomorphism group of the circle).\\
\\The following is a list of the geometric properties associated to the SQG equation we explore:

\begin{itemize}
\item \emph{Smoothness of the Riemannian exponential map}\\

The Riemannian exponential map on the volumorphism group in a Riemannian metric takes a velocity field (tangent vector) to the solution of the geodesic equation of the metric at time one. In our case, the geodesic equation is equivalent to SQG \eqref{SQG} and the geodesic evaluated at time one is a particle trajectory map. We may then ask whether or not this map is smooth. This question is partially answered by Constantin-Vicol-Wu \cite{Constantin14} where the authors demonstrate analyticity of the particle trajectories. Here, we are also concerned with smooth dependence on the initial data. In this paper we demonstrate that the Lagrangian formulation of SQG has smooth dependence on the initial data in the case that the underlying manifold is $\mathbb{R}^2$. This suggests in general that the Riemannian exponential map will be smooth for the $\dot{H}^{-1/2}$ right invariant metric on the volumorphism group of any manifold.\\
 
 \item \emph{Non-Fredholmness of the Riemannian exponential map}\\
 
  Next, we show that the Riemannian exponential map on $\mathcal{D}_\mu(\mathbb{S}^2)$ in the $\dot{H}^{-1/2}$ inner product is not a Fredholm map in the sense of Smale \cite{Smale65}.  Ebin et al. \cite{Ebin06} showed that, for $M$ a compact 2D Riemannian manifold without boundary, in the $L^2$ metric on $\mathcal{D}_\mu(M)$, the exponential map is a nonlinear Fredholm map of index zero. It was also demonstrated that the exponential map is \emph{not} Fredholm in the 3D situation. This points to a significant difference between 2D and 3D hydrodynamics. Fredholmness has been used to obtain results about the $L^2$ geometry of the 2D volumorphism group, such as an infinite dimensional version of the Morse Index Theorem (see Misio\l{}ek and Preston \cite{Misiolek09}), and a version of the Morse-Littauer Theorem (see Misio\l{}ek \cite{Misiolek15}). In this paper, we solve the Jacobi equation along a simple rotational flow to demonstrate the existence of an epiconjugate point that is not monoconjugate (see Grossman \cite{Grossman65}); thus the exponential map is non-Fredholm. Preston \cite{Preston10} showed that there is a concrete connection between blow up and the existence of conjugate points, thus our argument here provides evidence that the blow up behavior of 2D SQG is similar geometrically to that of 3D Euler.\\
  
 \item \emph{The sectional curvature of the volumorphism group in the $\dot{H}^{-1/2}$ metric and vanishing geodesic distance}\\
 
 Finally, as was suggested by Arnold \cite{Arnold66}, the sectional curvature of the volumorphism group is helpful in predicting Lagrangian stability in fluid flows. While intuitively appealing, little is currently known about this relationship. One would like to be able to use the Rauch Comparison test; however to make use of this theorem, one must bound the sectional curvature with either a strictly positive or strictly negative constant. For the $L^2$ metric, various partial situations were investigated by Preston \cite{Preston02} in which it was demonstrated that the situation is quite complicated if one does not have these bounds. In this paper we demonstrate that $\mathcal{D}_\mu(\mathbb{T}^2)$ (the volumorphism group of the flat torus) in the $\dot{H}^{-1/2}$ metric exhibits arbitrarily large curvature of both signs. As was first conjectured by Michor and Mumford \cite{Michor05}, we conjecture that the unbounded curvature implies that the  geodesic distance on this space vanishes.
 
\end{itemize}

\subsection*{Acknowledgments}
I would like to thank my advisor, Dr. Stephen Preston, for the original idea of this paper, as well as his constant advice and encouragement.

\section{The SQG equation as a geodesic equation}

Tao \cite{Tao14} demonstrated that solutions to the SQG equation are the critical points of a functional obtained from the inertia operator $A = (-\Delta)^{-1/2}$. Assume that $M$ is a 2D Riemannian manifold, possibly with boundary. It is known from Arnold \cite{Arnold66} that, in the case that $M$ is compact $\mathcal{D}_{\mu,ex}(M)$, the group of exact volumorphisms, can be thought of as an infinite-dimensional Lie-group. In this section, we demonstrate that the SQG equation is the geodesic equation on $\mathcal{D}_{\mu,ex}(M)$ in the $\dot{H}^{-1/2}$ metric, obtained from the inertia operator $A$ (we will be working on this space formally in the case that $M$ is not compact). However, in comparison with Bauer-Kolev-Preston \cite{Bauer15} we consider the $\dot{H}^{1/2}$ metric on $C^\infty(M)$. In other words, for $\phi,\psi\in C^\infty(M)$ such that $\phi|_{\partial M},\psi|_{\partial M} =0 $,

\begin{equation}\label{H1/2}
\left<\left<\phi,\psi\right>\right>_{\dot{H}^{1/2}} = \int_M (\Delta^{1/2} \phi) \psi d\mu.
\end{equation}

We can make this into a metric on $T_{id}\mathcal{D}_{\mu,ex}(M)$ by letting $u = \nabla^\perp \phi$ and $v = \nabla^\perp \psi$ which gives us:

\begin{equation}\label{H-1/2}
\left<\left<u,v\right>\right>_{\dot{H}^{-1/2}} = \int \left< (-\Delta)^{-1/2}u,v\right> d\mu.
\end{equation}

In other words, a possible analogy to the 1D $\dot{H}^{1/2}$ metric is the $\dot{H}^{-1/2}$ metric on $\mathcal{D}_{\mu,ex}(M)$. Using push-forward by right translation we then obtain a right invariant metric on all of $\mathcal{D}_{\mu,ex}(M)$. Now, on $C^\infty(M)$ the Euler-Arnold equation is given by

\begin{equation}\label{EulerArnoldStream}
\psi_t= -\mbox{ad}^*_\psi \psi,
\end{equation}

where $\mbox{ad}^*_\psi:\mathfrak{g}\rightarrow\mathfrak{g}$ is given by 

$$\left<\left<\mbox{ad}^*_\psi \phi,\nu \right>\right>_{\dot{H}^{1/2}} =\left< \left<\phi,\mbox{ad}_\psi\nu\right>\right>_{\dot{H}^{1/2}},$$

and $\mbox{ad}_\psi \nu = -\{\psi,\nu\} = \{\nu,\psi\}$ is the negative of the Poisson bracket, which for our purposes will be given by

$$\{\nu,\psi\} = d\nu(\nabla^\perp \psi) = \left<\nabla \nu , \nabla^\perp \psi\right>.$$

\begin{remark} 
Our $\mbox{ad}^*$ operator here defined on the Lie algebra $\mathfrak{g}$ is the same as that used in \cite{Misiolek09}, which is slightly different from the usual $\mbox{ad}^*$ operator defined on the dual Lie algebra $\mathfrak{g}^*$. 
\end{remark}

\begin{theorem}

The SQG equation is the geodesic equation on $\mbox{D}_{\mu,ex}(M)$ and on $\mbox{D}_{\mu}(M)$ in the $\dot{H}^{-1/2}$ metric.

\end{theorem}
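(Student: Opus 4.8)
The plan is to show that the Euler–Arnold equation \eqref{EulerArnoldStream} for the $\dot{H}^{1/2}$ metric on stream functions, when rewritten in terms of the vorticity-type variable $\theta = \Delta^{1/2}\psi$, reduces to the transport equation \eqref{SQG}. The essential idea is that the geodesic equation on $\mathcal{D}_{\mu,ex}(M)$ in a right-invariant metric is governed by the $\mbox{ad}^*$ operator, and for this particular metric the inertia operator $A = (-\Delta)^{1/2}$ acting on stream functions corresponds exactly to passing from $u$ to $\theta$. Since both $\mathcal{D}_{\mu,ex}(M)$ and $\mathcal{D}_\mu(M)$ have the same Lie algebra up to the harmonic part (and the $\dot{H}^{-1/2}$ metric ignores constants/harmonic pieces), the computation is the same for both groups, so it suffices to do it once.

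First I would compute $\mbox{ad}^*_\psi$ explicitly. Starting from the defining relation $\langle\langle \mbox{ad}^*_\psi\phi,\nu\rangle\rangle_{\dot H^{1/2}} = \langle\langle \phi, \mbox{ad}_\psi\nu\rangle\rangle_{\dot H^{1/2}} = \langle\langle \phi, \{\nu,\psi\}\rangle\rangle_{\dot H^{1/2}}$, I would unwind the left side as $\int_M \Delta^{1/2}(\mbox{ad}^*_\psi\phi)\,\nu\,d\mu$ and the right side as $\int_M \Delta^{1/2}\phi\,\{\nu,\psi\}\,d\mu = \int_M \Delta^{1/2}\phi\,\langle\nabla\nu,\nabla^\perp\psi\rangle\,d\mu$. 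Integrating by parts (using that $\nabla^\perp\psi$ is divergence-free, so it moves onto the other factor as a derivation, with the boundary terms vanishing since $\psi|_{\partial M}=0$), the right side becomes $-\int_M \nu\,\langle\nabla(\Delta^{1/2}\phi),\nabla^\perp\psi\rangle\,d\mu = -\int_M \nu\,\{\Delta^{1/2}\phi,\psi\}\,d\mu$. Matching integrands against arbitrary $\nu$ yields $\Delta^{1/2}(\mbox{ad}^*_\psi\phi) = -\{\Delta^{1/2}\phi,\psi\}$, i.e. $\mbox{ad}^*_\psi\phi = -(\Delta^{1/2})^{-1}\{\Delta^{1/2}\phi,\psi\} = (-\Delta)^{-1/2}\{\psi,\Delta^{1/2}\phi\}$.

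Next I would substitute into \eqref{EulerArnoldStream}: setting $\phi=\psi$ gives $\psi_t = -(-\Delta)^{-1/2}\{\psi,\Delta^{1/2}\psi\}$. Applying $\Delta^{1/2}$ to both sides and writing $\theta := \Delta^{1/2}\psi$ (so that $u = \nabla^\perp\psi = \nabla^\perp(-\Delta)^{-1/2}\theta = \mathcal{R}^\perp\theta$, matching the constitutive law in \eqref{SQG}), I get $\theta_t = -\{\psi,\theta\} = -\langle\nabla\theta,\nabla^\perp\psi\rangle = -\langle u,\nabla\theta\rangle$, which is exactly \eqref{SQG}. I would then note that conversely any solution of \eqref{SQG} with $u=\mathcal{R}^\perp\theta$ gives, via $\psi = (-\Delta)^{-1/2}\theta$, a solution of the Euler–Arnold equation, so the correspondence is an equivalence; and since on a compact boundaryless 2D manifold the Euler–Arnold equation for a right-invariant metric is by the standard Arnold formalism the geodesic equation, the claim follows. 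The same derivation applies verbatim on $\mathcal{D}_\mu(M)$ once one observes that the metric \eqref{H-1/2} only depends on the $L^2$-orthogonal-to-harmonic part of $u$, so the geodesic equations on the two groups coincide.

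**The main obstacle** is making the boundary-term bookkeeping and the operator identifications rigorous: I would need to be careful that $(-\Delta)^{-1/2}$ and $\Delta^{1/2}$ are well-defined and invertible on the relevant function space (mean-zero or Dirichlet-zero functions, excluding harmonic fields), that the integration by parts producing $\mbox{ad}^*$ genuinely has no boundary contribution when $M$ has boundary (this is where the hypothesis $\psi|_{\partial M}=0$ and the tangency of $\nabla^\perp\psi$ to $\partial M$ are used), and that the commutator between $\Delta^{1/2}$ and the Poisson bracket is handled correctly — these nonlocal operators do not commute with multiplication, so the manipulation $\Delta^{1/2}\mbox{ad}^*_\psi\phi = -\{\Delta^{1/2}\phi,\psi\}$ must be read as the definition of $\mbox{ad}^*$ rather than a naive commutation. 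Everything else is a routine unwinding of Arnold's formalism.
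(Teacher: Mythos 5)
Your proposal is correct and follows essentially the same route as the paper: you compute $\mbox{ad}^*_\psi$ on stream functions by integration by parts (with the boundary term vanishing because $\psi|_{\partial M}=0$), obtain $A(\mbox{ad}^*_\psi\phi) = -\langle \nabla A\phi,\nabla^\perp\psi\rangle$, and then substitute $\theta = A\psi$, $u=\nabla^\perp\psi$ into the Euler--Arnold equation to recover SQG, with the $\mathcal{D}_\mu(M)$ case handled by a brief remark, exactly as in the paper's one-line treatment. The only blemish is a sign slip in your final expansion of $-\{\psi,\theta\}$ (with the paper's convention $\{\nu,\psi\}=\langle\nabla\nu,\nabla^\perp\psi\rangle$ it equals $+\langle\nabla\theta,\nabla^\perp\psi\rangle$, not $-\langle\nabla\theta,\nabla^\perp\psi\rangle$), but an equivalent convention-level sign ambiguity appears in the paper's own last step and it is harmless, amounting only to the substitution $\psi\mapsto-\psi$.
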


\begin{proof}

For $\mathcal{D}_{\mu,ex}(M)$, we compute $\mbox{ad}^*$ on $C^\infty(M)$. For $\psi,\phi, \nu \in C^\infty(M)$ such that $\psi|_{\partial M},\phi|_{\partial M}, \nu|_{\partial M} =0$,

$$\left<\left< \mbox{ad}^*_\psi \phi,\nu\right>\right> = \left< \left<\phi,\mbox{ad}_\psi\nu\right>\right> =\left< \left<\phi,\{\nu,\psi\}\right>\right>$$

$$ = \int_M A(\phi) \{\nu,\psi\} d\mu = \int_M A(\phi) \left< \nabla \nu, \nabla^\perp \psi\right> d\mu$$

$$ = -\int_M \nu \; \mbox{div} (A(\phi)\nabla^\perp \psi)d\mu  + \int_{\partial M} \nu \left<A(\phi)\nabla^\perp \psi, n\right>d\tilde{\mu}=  -\int_M \nu \left<\nabla A(\phi), \nabla^\perp \psi\right> d\mu,$$ 

where $n$ is the unit normal to the boundary and $d\tilde{\mu}$ is the boundary measure. Note then, that the boundary term vanishes. Thus

$$\mbox{ad}^*_\psi \phi = A^{-1} \left(-\left<\nabla A(\phi), \nabla^\perp \psi\right> \right),$$

and the geodesic equation \eqref{EulerArnoldStream} becomes

$$A\psi_t =- \left<\nabla A\psi, \nabla^\perp \psi\right>. $$

Letting $A\psi = \theta$ and $u = \nabla^\perp \psi$ we obtain the SQG equation \eqref{SQG}. The case for $\mathcal{D}_\mu(M)$ follows by computing $\mbox{ad}^*$ for vector fields in the $\dot{H}^{-1/2}$ metric and then applying $\nabla \times A$ to both sides of the equation. 
\end{proof}

\begin{remark}
Note that if $M$ admits harmonic vector fields, then this inner product is degenerate on $\mathcal{D}_\mu(M)$. Thus in these situations we are really considering this as a geodesic equation on a homogenous space. One needs to verify that the inertia operator is invariant with respect to $\mbox{Ad}$ as is done in Khesin and Misio\l{}ek \cite{Khesin03}. A short computation shows that this holds.
\end{remark}

\section{Smoothness of the Riemannian Exponential Map}
 The Riemannian exponential map on a Riemannian manifold $\mathbf{N}$ at a point $(p,v)\in T\mathbf{N}$, the tangent bundle of $\mathbf{N}$ is given by
 
 $$ \mbox{exp}: T\mathbf{N} \rightarrow \mathbf{N}$$
 
 \begin{equation}\label{exponentialmap}
 \mbox{exp}_p(v) = \gamma_v(1),
 \end{equation}
 
 where $v\in T_p\mathbf{N}$ and $\gamma_v(1)$ is the geodesic through $p$ with initial velocity $v$ evaluated at time $1$. As in Constantin-Vicol-Wu \cite{Constantin14}, we may write SQG \eqref{SQG} as an ODE on a Banach manifold $\mathbf{M}$ (to be defined below), which will correspond to a sub-manifold of $T\mathbf{N}$. The ODE will look like

\begin{equation}\label{generalode}
\frac{dX}{dt} = F(X,\theta_0).
\end{equation}

Then $p $ and $v$ will correspond to $X$ and $\theta_0$ respectively. Smoothness of the Riemannian exponential map is then equivalent to the above equation having smooth (in time) solutions that vary smoothly with respect to the initial data. There are results establishing smoothness of exponential maps in general Sobolev metrics. For example, Escher-Kolev \cite{Escher14} did this for Sobolev metrics of order $s>\frac{1}{2}$ on the diffeomorphism group of the circle. However, our Sobolev metric is of \emph{negative} index, thus no known results apply. Constantin-Vicol-Wu \cite{Constantin14} demonstrated that the individual particle paths are analytic as functions of time. They proved the following theorem that we cite here for convenience:

\begin{theorem}[Constantin-Vicol-Wu \cite{Constantin14}]

Consider initial data $\theta_0\in C^{1,\gamma}\cap W^{1,1}$, and let $\theta$ be the unique maximal solution to \eqref{SQG}, with $\theta \in L_{loc}^\infty([0,T_*);C^{1,\gamma}\cap W^{1,1})$. Given any $t\in [0,T_*)$, there exists $T\in (0,T_*-t)$, with $T = T(||\nabla u||_{L^\infty(t,(t+T_*)/2;L^\infty)})$, and $R>0$ with $R = R(t,||\theta_0||_{C^{1,\gamma}\cap W^{1,1}},\gamma)$, such that

\begin{equation}\label{Xestimate}
||\partial_t^n(X-id)||_{L^\infty(t,t+T;C^{1,\gamma})}\leq C n! R^{-n}
\end{equation}

holds for any $n\geq 0 $. Here $C$ is a universal constant, and the norm $||X-id||_{C^{1,\gamma}}$ is defined in equation \eqref{C1gammanorm}. In particular, the Lagrangian trajectory $X$ is a real analytic function of time, with radius of analyticity $R$.

\end{theorem}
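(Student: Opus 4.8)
The statement is quoted from Constantin--Vicol--Wu, so I will only sketch the argument I would give. The plan is to rewrite the Lagrangian flow of \eqref{SQG} as an \emph{autonomous} integro-differential equation on a Banach space of maps, show that its right-hand side is real-analytic on the open set of bi-Lipschitz near-identity maps, and then extract the quantitative bound \eqref{Xestimate} by a direct induction on $n$ (the abstract Cauchy--Kovalevskaya theorem for analytic ODEs gives the qualitative conclusion, but the induction is what keeps track of the constants). First I would express the Eulerian velocity as a singular integral: since $u=\mathcal{R}^\perp\theta=\nabla^\perp(-\Delta)^{-1/2}\theta$, there is a Calder\'on--Zygmund kernel $K$ on $\mathbb{R}^2$, odd and homogeneous of degree $-2$ (schematically $K(z)\sim z^\perp/|z|^3$), with $u(x,t)=\int_{\mathbb{R}^2}K(x-y)\,\theta(y,t)\,dy$. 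Because $\theta$ is transported by the volume-preserving flow $X$, i.e. $\theta(X(a,t),t)=\theta_0(a)$ and $\det DX\equiv1$, the change of variables $y=X(b,t)$ turns the particle-trajectory equation $\partial_t X(a,t)=u(X(a,t),t)$ into
\begin{equation}
\partial_t X(a,t)=\int_{\mathbb{R}^2}K\big(X(a,t)-X(b,t)\big)\,\theta_0(b)\,db=:F(X)(a),
\end{equation}
a closed equation of the type \eqref{generalode} that is now autonomous in $t$ (with $\theta_0$ a fixed parameter) and in which the dependence on $X$ enters only through the differences $X(a,t)-X(b,t)$.

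Second, I would prove that $F$ is real-analytic as a map from the open set
$$\mathcal{U}=\Big\{X:\ X-id\in C^{1,\gamma},\ \tfrac12|a-b|\le|X(a)-X(b)|\le 2|a-b|\Big\}$$
into $C^{1,\gamma}$. This is the step where the non-integrable singularity of $K$ must be confronted. On $\mathcal{U}$ the argument $X(a)-X(b)$ never degenerates relative to $|a-b|$, so $F(X)$ splits into a near-field piece, estimated using the H\"older cancellation of the Calder\'on--Zygmund kernel together with $\theta_0\in C^{1,\gamma}$, and a far-field piece, estimated using $\theta_0\in W^{1,1}$; since $K$ is a fixed analytic function away from the origin, Cauchy estimates give $\|D^jK\|\lesssim j!\,\rho^{-j}$ on the relevant region, and the Fr\'echet derivatives of $F$ of all orders are controlled by the same near/far decomposition, with the only factorial loss coming from differentiating $z\mapsto K(z)$.

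Third, for the explicit bound \eqref{Xestimate} I would differentiate the integro-differential equation $n$ times in $t$. Using the Leibniz and Fa\`a di Bruno formulas, $\partial_t^{n}X$ is expressed through $\partial_t^{k}X$ with $k<n$, paired against derivatives of $K$ and $\theta_0$; the a priori hypothesis on $\|\nabla u\|_{L^\infty}$ over $[t,(t+T_*)/2]$ guarantees that, on a short interval $[t,t+T]$, the flow $X$ remains in $\mathcal{U}$, so all the operator norms from the previous step hold uniformly there. Feeding the ansatz $\|\partial_t^{k}(X-id)\|_{C^{1,\gamma}}\le Ck!\,R^{-k}$ into the recursion, the resulting combinatorial sums --- weighted by the Cauchy estimates $\|D^jK\|\lesssim j!\,\rho^{-j}$ --- collapse into a convergent series once $R$ is chosen small relative to $\rho$ and to the a priori constants, reproducing $Cn!\,R^{-n}$ and closing the induction; the stated dependence $R=R(t,\|\theta_0\|_{C^{1,\gamma}\cap W^{1,1}},\gamma)$ is exactly the dependence of $\rho$ and of the near/far estimates on those quantities.

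I expect the main obstacle to be the second step: producing clean analytic estimates for $F$ and \emph{all} of its derivatives in spite of the singular Biot--Savart-type kernel. Everything rests on keeping the flow inside $\mathcal{U}$, equivalently on controlling $\|DX^{-1}\|_{L^\infty}$, which is where incompressibility and the transport structure of \eqref{SQG} are essential; once that is in hand the combinatorics of the third step, though tedious, are routine bookkeeping and yield the sharp $n!$ growth rather than $(n!)^2$.
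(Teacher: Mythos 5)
This theorem is quoted from Constantin--Vicol--Wu \cite{Constantin14} and is not proved in the paper itself, so the only comparison available is with the cited source and with the Section 3 machinery built on it. Your sketch follows essentially that same route---the particle-trajectory reformulation with the kernel $K(z)=z^{\perp}/(2\pi|z|^{3})$ as in \eqref{F}, a chord-arc/bi-Lipschitz condition as in \eqref{chordarc} defining the good open set, and the inductive $n!\,R^{-n}$ bookkeeping as in \eqref{mainbound}---and you correctly flag that the real content lies in the uniform estimates for the principal-value kernel composed with the flow, which is precisely the technical work of \cite{Constantin14} that a sketch at this level does not reproduce.
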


Our purpose here is to demonstrate that the Riemannian exponential map is smooth, which is equivalent to demonstrating smooth dependence on on the initial data ($X$ and $\theta_0$) in \eqref{generalode}. We will do so not on $T\mathbf{N}$ but instead on a closely related Banach affine space, denoted by $\mathbf{M}$ to be defined below. We will show how the argument from Constantin-Vicol-Wu \cite{Constantin14} can be extended to obtain smoothness of $F$ so that the following theorem from Lang (Chapter 4, Theorem 1.11 in \cite{Lang95}) can be applied:

\begin{theorem}[Lang \cite{Lang95}]\label{Langtheorem}

Let $J$ be an open interval in $\mathbb{R}$ containing $0$ and $U$ open in the Banach space $\mathbf{E}$. Let

$$f :J\times U \rightarrow \mathbf{E}$$

be a $C^p$ map with $p\geq 1$, and let $x_0\in U$. There exists a unique local flow for $f$ at $x_0$. We can select an open subinterval $J_0$ if $J$ containing $0$ and an open subset $U_0$ of $U$ containing $x_0$ such that the unique local flow 

$$\eta: J_0\times U_0 \rightarrow U$$

is of class $C^p$, and such that $D_2\eta$ satisfies the differential equation

$$D_1D_2\eta(t,x) = D_2f(t,\eta(t,x))D_2\eta(t,x)$$

on $J_0\times U_0$ with initial condition $D_2\eta(0,x) = id$.

\end{theorem}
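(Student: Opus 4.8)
The plan is to prove this by the classical two-stage method: first establish the existence, uniqueness, and continuity of the local flow by Picard iteration, and then promote this to $C^p$ regularity by a variational argument followed by induction on the order of differentiability. For the existence step I would recast the flow as the fixed point of the integral operator
$$\Phi(\eta)(t,x) = x + \int_0^t f(s, \eta(s,x))\, ds.$$
Because $f$ is $C^p$ with $p \geq 1$, on a suitable product neighborhood $[-a,a] \times \bar{B}(x_0,r)$ it is Lipschitz in its second argument with a uniform constant $L$; choosing $a$ small enough that $aL<1$ makes $\Phi$ a contraction on the Banach space of bounded continuous maps $J_0 \times U_0 \to \mathbf{E}$. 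Its unique fixed point is the local flow $\eta$, which is continuous in $(t,x)$, and Gronwall's inequality applied to $\|\eta(t,x)-\eta(t,y)\|$ upgrades this to Lipschitz dependence on the initial point. This yields existence and uniqueness of the flow and the initial condition $\eta(0,x)=x$.

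The heart of the argument is the $C^1$ step, which I expect to be the main obstacle, since in a Banach space one cannot divide by the increment $h$ and must instead produce the derivative directly and verify that it is a genuine Fréchet derivative. I would define $W(t,x)$ as the unique solution of the linear variational equation
$$\tfrac{d}{dt} W(t,x) = D_2 f(t,\eta(t,x))\, W(t,x), \qquad W(0,x) = \mathrm{id},$$
which exists and is continuous because $t \mapsto D_2 f(t,\eta(t,x))$ is continuous into the bounded operators and the equation is linear in $W$. The claim is that $W = D_2\eta$. To verify it I would estimate the remainder $\rho(t) = \eta(t,x+h) - \eta(t,x) - W(t,x)h$, obtain an integral equation for $\rho$ by expanding the increment via the mean value theorem in integral form,
$$f(s,\eta(s,x+h)) - f(s,\eta(s,x)) = \int_0^1 D_2 f\bigl(s, \eta(s,x) + \tau\, \delta(s)\bigr)\, \delta(s)\, d\tau, \qquad \delta(s) = \eta(s,x+h)-\eta(s,x),$$
and conclude through Gronwall, together with the continuity of $D_2 f$ and the Lipschitz bound on $\delta$, that $\|\rho(t)\| = o(\|h\|)$ uniformly for $t \in J_0$. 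Continuity of $(t,x) \mapsto W(t,x)$ then gives $\eta \in C^1$, and the displayed variational identity in the statement is exactly the equation defining $W$.

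Finally I would reach the full $C^p$ conclusion by induction on $p$ using the tangent-vector-field trick. One forms the augmented field $Tf(t,(x,v)) = \bigl(f(t,x),\, D_2 f(t,x)\, v\bigr)$ on $J \times (U \times \mathbf{E})$, which is $C^{p-1}$ whenever $f$ is $C^p$. Its flow is precisely $(t,(x,v)) \mapsto (\eta(t,x),\, W(t,x)v)$, so by the inductive hypothesis applied to $Tf$ this flow is $C^{p-1}$; unwinding the definition shows that $\eta$ and $D_2\eta$ are both $C^{p-1}$, whence $\eta$ is $C^p$. The base case $p=1$ is the preceding paragraph, completing the induction. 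The main delicacy throughout is the careful bookkeeping of uniformity of the Gronwall estimates in the initial point, which is what guarantees that the limits defining the derivatives are Fréchet derivatives rather than merely directional ones.
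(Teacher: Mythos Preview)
The paper does not prove this theorem at all: it is quoted verbatim from Lang's textbook (Chapter~4, Theorem~1.11) and used as a black box, so there is no ``paper's own proof'' to compare against. Your sketch is essentially the classical argument that Lang himself gives---Picard contraction for existence and uniqueness, the variational equation plus a Gronwall estimate on the remainder to obtain $C^1$, and the tangent-lift trick $Tf(t,(x,v)) = (f(t,x), D_2 f(t,x)v)$ to induct up to $C^p$---and it is correct as outlined.
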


Note that in our case, $f = F$ will be autonomous, so we will only need smoothness in $U$. This in turn demonstrates smoothness of the Riemannian exponential map in this situation. In fact, most of the argument has been done in Constantin-Vicol-Wu \cite{Constantin14}. Here we are extending their argument to obtain smooth dependence of the initial data and hence smoothness of the Riemannian exponential map. 

\begin{remark}

As Constantin-Vicol-Wu \cite{Constantin14} demonstrated analyticity of the Lagrangian trajectories, one can quite likely extend these arguments to obtain analyticity of the Riemannian exponential map as was done by Shnirelman \cite{Shnirelman12} for the $L^2$ metric on $\mathcal{D}_\mu^sT^3$, the group of order $s$ Sobolev class volumorphisms for $s>5/2$.

\end{remark}

\subsection{The Domain $\mathbf{M}$}
Following the strategy of Chapter 4 of Majda-Bertozzi \cite{Majda02} we enlarge the space of volume preserving maps to allow for maps with some compressibility. This allows us to apply Theorem \ref{Langtheorem} directly as we can deal with an open subset of a Banach space rather than a submanifold. Constantin-Vicol-Wu \cite{Constantin14} analyze SQG explicitly on the volume preserving case, however, as they mention, their argument extends to the compressible case. We let 

$$\mathbf{N} =  C^{1,\gamma}(\mathbb{R}^2,\mathbb{R}^2),$$

for 
$$C_b^{1,\gamma}(\mathbb{R}^2,\mathbb{R}^2) = \{Y:\mathbb{R}^2\rightarrow \mathbb{R}^2 : ||Y||_{1,\gamma}<\infty\},$$

$$C^{1,\gamma}(\mathbb{R}^2,\mathbb{R}^2) = \{id+C_b^{1,\gamma}(\mathbb{R}^2,\mathbb{R}^2) \},$$

where 

\begin{equation}\label{C1gammanorm}
||Y||_{1,\gamma} = ||Y||_{L^\infty} + ||\nabla Y  ||_{L^\infty} + \lbrack \nabla Y \rbrack_{C^\gamma}.
\end{equation}

Here, the $L^\infty$ norm is taken as the largest absolute value of an entry of the corresponding vector or matrix. Note that $C^{1,\gamma}(\mathbb{R}^2,\mathbb{R}^2)$ is an affine Banach space. Then we may identify 

$$T\mathbf{N} = C^{1,\gamma}(\mathbb{R}^2,\mathbb{R}^2) \times C_b^{1,\gamma}(\mathbb{R}^2,\mathbb{R}^2).$$

Points in $T\mathbf{N}$ are of the form $(X,u)$. We define the domain on which we'll be solving SQG to be

$$\mathbf{M} = C^{1,\gamma}(\mathbb{R}^2,\mathbb{R}^2)\times\left(C^{1,\gamma}(\mathbb{R}^2)\cap W^{1,1}(\mathbb{R}^2)\right),$$

 where $C^{1,\gamma}(\mathbb{R}^2)$ is the space of H\"{o}lder continuous functions on $\mathbb{R}^2$ and $W^{1,1}(\mathbb{R}^2)$ is the corresponding Sobolev space of functions on $\mathbb{R}^2$. We note then that the perpendicular Riesz transform, 

$$\mathcal{R}^\perp: C^{1,\gamma}(\mathbb{R}^2) \cap W^{1,1}(\mathbb{R}^2)\rightarrow  C_b^{1,\gamma}(\mathbb{R}^2,\mathbb{R}^2),$$

$$\mathcal{R}^\perp: \theta \mapsto u,$$

gives a correspondence between $\mathbf{M}$ and a subset of $T\mathbf{N}$. We must also select the open set $\mathcal{U}\subset \mathbf{M}$ on which we'll define $F$, as in the theorem from Lang. As discussed above, our problem is that we would like to focus only on $X$ such that $\mbox{det} \nabla_aX(a)= 1$, but this does not yield us an open subset of $\mathbf{M}$, thus as in chapter 4 of Majda-Bertozzi \cite{Majda02}, we enlarge our domain to include some compressibility. We define 

$$\mathcal{O} = \{  X = id+Y\in C^{1,\gamma}(\mathbb{R}^2,\mathbb{R}^2):\frac{9}{10}< \underset{a\in \mathbb{R}^2}{\inf} \mbox{det }\nabla_a X(a) ,\;\;||Y||_{1,\gamma}<c\},$$

$$\mathcal{U} = \mathcal{O}\times \left(C^{1,\gamma}(\mathbb{R}^2)\cap W^{1,1}(\mathbb{R}^2)\right),$$

where

$$c=\frac{7}{20}.$$

That $\mathcal{O}$ and hence $\mathcal{U}$ are open in their respective spaces follows from continuity of $\underset{a\in \mathbb{R}^2}{\inf} \mbox{det}$ and $||\cdot ||_{1,\gamma}$. We will also require more properties of $\mathcal{O}$ that are used by Constantin \cite{Constantin14}. The main fact they use to obtain analyticity of the particle trajectories is the following chord-arc condition, which is satisfied by solutions of SQG with Lipschitz velocity field $u$. That is, there is a constant $\lambda$ such that 

\begin{equation}\label{chordarc}
\lambda^{-1} \leq \frac{|a-b|}{|X(a,t)- X(b,t)|} \leq \lambda.
\end{equation}

We claim that there exists such a constant $\lambda$ for $ X \in \mathcal{O}$. First we need the following result:

\begin{lemma}\label{Xhomeo}

Suppose that $X=id+Y\in \mathcal{O}$, then $X$ is a homeomorphism of $\mathbb{R}^2$ onto $\mathbb{R}^2$.

\end{lemma}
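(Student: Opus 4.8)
The plan is to show that the quantitative smallness built into $\mathcal{O}$ forces $X = id + Y$ to be a bi-Lipschitz bijection of $\mathbb{R}^2$ onto $\mathbb{R}^2$, which in particular is a homeomorphism. First I would record that $\|\nabla Y\|_{L^\infty} \le \|Y\|_{1,\gamma} < \tfrac{7}{20}$, so that $Y$ is Lipschitz with some constant $L<1$ (with the paper's entrywise matrix norm one gets $L \le 2\cdot\tfrac{7}{20} = \tfrac{7}{10}$; the precise value is immaterial, only $L<1$ matters). The mean value inequality along segments then gives, for all $a,b\in\mathbb{R}^2$,
$$(1-L)\,|a-b| \;\le\; |X(a)-X(b)| \;\le\; (1+L)\,|a-b|.$$
The left inequality makes $X$ injective and shows that, wherever it is defined, $X^{-1}$ is Lipschitz with constant $(1-L)^{-1}$; the right inequality makes $X$ Lipschitz. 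So the only substantive point remaining is surjectivity.

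For surjectivity I would show that $X(\mathbb{R}^2)$ is nonempty, open, and closed, and then invoke connectedness of $\mathbb{R}^2$. Closedness is immediate from the lower bound: if $X(a_n)\to y$, then $(1-L)|a_n-a_m|\le|X(a_n)-X(a_m)|$ shows $(a_n)$ is Cauchy, hence $a_n\to a$ for some $a$, and continuity of $X$ gives $X(a)=y$. Openness follows from the inverse function theorem: $X\in C^1$ and $\det\nabla_a X(a) > \tfrac{9}{10}>0$ everywhere, so $X$ is a local $C^1$ diffeomorphism and in particular an open map. (Alternatively one may invoke invariance of domain, which needs only continuity and injectivity, or Hadamard's global inverse function theorem.) Hence $X(\mathbb{R}^2)=\mathbb{R}^2$.

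Putting the pieces together, $X$ is a continuous bijection of $\mathbb{R}^2$ whose inverse is Lipschitz, hence a homeomorphism (in fact a bi-Lipschitz one), which in passing also records the lower chord-arc bound in \eqref{chordarc} with $\lambda = (1-L)^{-1}$. A slightly different and equally short route is to note that $X$ is a proper local homeomorphism onto $\mathbb{R}^2$ — proper because $|X(a)|\ge(1-L)|a|-|X(0)|\to\infty$ as $|a|\to\infty$ — hence a covering map, and any covering of the simply connected space $\mathbb{R}^2$ is single-sheeted. I do not expect a genuine obstacle here: the lemma is a soft consequence of the smallness of $\mathcal{O}$, and the only place topology enters is the openness of the image, which the determinant bound dispatches via the inverse function theorem. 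The one thing to be careful with is the bookkeeping of the matrix/vector norm conventions so as to confirm $L<1$; everything else is routine.
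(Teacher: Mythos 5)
Your proof is correct, but it takes a genuinely different route from the paper's. The paper does not argue via a contraction/bi-Lipschitz estimate at all: it writes out the inverse of the $2\times 2$ matrix $\nabla X$ explicitly, uses the entrywise bound $\|\nabla Y\|_{L^\infty}<c$ together with the determinant lower bound $\inf_a \det\nabla_a X(a)>\tfrac{9}{10}$ to get the uniform bound $\|\nabla X^{-1}\|_{L^\infty}<\tfrac{3}{2}$, and then invokes Hadamard's global inverse function theorem (Berger, p.~222) as a black box to conclude that $X$ is a homeomorphism of $\mathbb{R}^2$ onto $\mathbb{R}^2$. You instead exploit the smallness $c=\tfrac{7}{20}$ to get an operator-norm Lipschitz constant $L\le 2c=\tfrac{7}{10}<1$ for $Y$, derive the two-sided estimate $(1-L)|a-b|\le|X(a)-X(b)|\le(1+L)|a-b|$, and prove surjectivity by the open-closed-connected argument; this is self-contained and essentially reproves Hadamard's theorem in the perturbative regime. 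The trade-offs: your argument hinges on $L<1$, i.e.\ on the specific choice of $c$, whereas the Hadamard route needs only uniform invertibility of $\nabla X$ and would survive a larger $c$; on the other hand, your route produces explicit bi-Lipschitz constants and so anticipates the chord-arc condition \eqref{chordarc} of the next lemma --- though note your constant $(1-L)^{-1}=\tfrac{10}{3}$ is weaker than the $\lambda=\tfrac{3}{2}$ the paper later needs, which it obtains precisely from the sharper bound $\|\nabla X^{-1}\|_{L^\infty}<\tfrac32$ extracted in this proof. Your bookkeeping of the entrywise versus operator matrix norms is the one delicate point, and you handle it correctly.
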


\begin{proof}
Since $X\in \mathcal{O}$, $||\nabla Y||_{L^\infty}<c$. Hence the largest an entry of $\nabla X$ can be in magnitude is $c+1$. Writing out the inverse of $\nabla X$ explicitly, combined with the fact that 

$$ \frac{9}{10}< \underset{a\in \mathbb{R}^2}{\inf} \mbox{det }\nabla_a X(a) $$

yields:

$$||\nabla X^{-1}||_{L^\infty}<\frac{3}{2}.$$

As is similarly discussed in Majda-Bertozzi \cite{Majda02}, a result of Hadamard (\cite{Berger77}, pg. 222) demonstrates that if $X\in \mathcal{O}$ and there exists a constant $d$ such that

$$||\nabla X^{-1}||_{L^\infty} \leq d,$$

then $X$ is a homeomorphism of $\mathbb{R}^2$ onto $\mathbb{R}^2$.
\end{proof}

In order to make use of the estimates of Constantin-Vicol-Wu \cite{Constantin14}, it is necessary that $\lambda \in (1,\frac{3}{2}\rbrack$ in \eqref{chordarc}.

\begin{lemma}

If $X\in \mathcal{O}$, then $X$ satisfies the chord-arc condition \eqref{chordarc} for $\lambda =\frac{3}{2}$. 

\end{lemma}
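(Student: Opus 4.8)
The plan is to prove that both $X$ and its inverse are Lipschitz with constant $\frac{3}{2}$. Since $X$ is injective by Lemma \ref{Xhomeo}, for $a\neq b$ we have $X(a)\neq X(b)$, so the quotient in \eqref{chordarc} is well defined, and the two Lipschitz bounds together read $\frac{2}{3}|a-b|\leq|X(a)-X(b)|\leq\frac{3}{2}|a-b|$, which is exactly the chord-arc condition \eqref{chordarc} with $\lambda=\frac{3}{2}$ (so that $\lambda^{-1}=\frac{2}{3}$). Thus there are two estimates to establish: a Lipschitz bound for $X$ itself, and a Lipschitz bound for $X^{-1}$, the latter being available since Lemma \ref{Xhomeo} guarantees $X^{-1}$ exists.

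The bound for $X$ is the easy half. Writing $X=id+Y$ with $\|Y\|_{1,\gamma}<c=\frac{7}{20}$, the matrix $\nabla X=I+\nabla Y$ has every entry within $c$ of the corresponding entry of the identity, so along any segment the mean value inequality gives $|X(a)-X(b)|\leq(1+c)|a-b|=\frac{27}{20}|a-b|\leq\frac{3}{2}|a-b|$; hence $\frac{|a-b|}{|X(a)-X(b)|}\geq\frac{2}{3}=\lambda^{-1}$.

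For the reverse inequality the naive approach is not enough: applying the reverse triangle inequality to $X(a)-X(b)=(a-b)+(Y(a)-Y(b))$ yields only $|X(a)-X(b)|\geq(1-c)|a-b|=\frac{13}{20}|a-b|$, and $\frac{13}{20}<\frac{2}{3}$, so this route would force $\lambda>\frac{3}{2}$. Instead I would pass through the inverse map. Because $X\in C^{1,\gamma}$ and $\det\nabla X>\frac{9}{10}>0$ everywhere, $X$ is a local $C^{1}$ diffeomorphism, and by Lemma \ref{Xhomeo} it is a global homeomorphism of $\mathbb{R}^2$; hence $X^{-1}$ is $C^1$, its Jacobian at $y$ is the matrix inverse of $\nabla X$ at $X^{-1}(y)$, and the explicit computation from the proof of Lemma \ref{Xhomeo} (Cramer's rule, together with $\det\nabla X>\frac{9}{10}$ and the entrywise bound on $\nabla X$) gives $\|\nabla X^{-1}\|_{L^\infty}<\frac{3}{2}$. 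Applying the mean value inequality to $X^{-1}$ between the points $p=X(a)$ and $q=X(b)$ then gives $|a-b|=|X^{-1}(p)-X^{-1}(q)|\leq\frac{3}{2}|p-q|=\frac{3}{2}|X(a)-X(b)|$, i.e.\ $\frac{|a-b|}{|X(a)-X(b)|}\leq\frac{3}{2}=\lambda$. Combining with the previous paragraph completes the proof.

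The point requiring the most care — and the one where the precise thresholds $\frac{9}{10}$ and $c=\frac{7}{20}$ defining $\mathcal{O}$ are used — is the conversion of entrywise $L^\infty$ control of the Jacobians into genuine Lipschitz constants for $X$ and $X^{-1}$ in the Euclidean distance, and it is exactly this bookkeeping that forces the lower bound to be routed through Lemma \ref{Xhomeo} rather than proved by hand. Finally, the lemma is applied with $X$ frozen at a fixed time, so the time-dependent chord-arc condition \eqref{chordarc} for a genuine solution of SQG follows provided its Lagrangian flow stays in $\mathcal{O}$, which is why $\mathcal{O}$ was set up with this much slack.
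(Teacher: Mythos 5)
Your proof is correct and takes essentially the same route as the paper's: the lower bound on the chord-arc quotient comes from the Lipschitz estimate $|X(a)-X(b)|\leq \|\nabla X\|_{L^\infty}|a-b|$, and the upper bound comes from applying the bound $\|\nabla X^{-1}\|_{L^\infty}<\tfrac{3}{2}$ obtained in the proof of Lemma \ref{Xhomeo} to $X^{-1}$ at the points $\alpha=X(a)$, $\beta=X(b)$. Your side remark that the reverse triangle inequality alone would only give the constant $\tfrac{13}{20}$ is a sensible observation but does not change the argument, which matches the paper's.
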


\begin{proof}

Since $X\in C^{1,\gamma}(\mathbb{R}^2,\mathbb{R}^2)$, given $a,b \in \mathbb{R}^2$, 

$$|X(a) - X(b)| \leq |\nabla X|_{L^\infty} |a-b|.$$

Thus

\begin{equation}\label{lipschitzineq}
\frac{1}{|\nabla X|_{L^\infty} }\leq \frac{|a-b|}{ |X(a) - X(b)| }.
\end{equation}

Hence,

$$|X^{-1}(\alpha) - X^{-1}(\beta)| \leq ||\nabla X^{-1}||_{L^\infty} |\alpha-\beta| < \frac{3}{2} |\alpha-\beta|,$$

where we have used the bound on $||\nabla X^{-1}||_{L^\infty}$ obtained in the proof of lemma \ref{Xhomeo}. Choosing $\alpha = X(a)$ and $\beta = X(b)$ yields:

$$\frac{|a-b|}{ |X(a) - X(b)| }< \frac{3}{2}.$$

This combined with \eqref{lipschitzineq} gives us the claim.

\end{proof}

For the SQG equation, we recover the velocity field from the vorticity by

$$u(x) = \mathcal{R}^\perp \theta(x) = \int_{\mathbb{R}^2} \frac{(x-y)^\perp}{2\pi |x-y|^3}\theta(y)dy = \int_{\mathbb{R}^2} K(x-y) \theta(y)dy,$$

where all integrals are considered in the principal value sense. The SQG equation itself says that

$$\theta(X(b,t),t) = \theta_0(b).$$

Hence from the flow equation we obtain

\begin{equation}\label{Fnovarsub}
\frac{dX}{dt} (a,t) = \int_{\mathbb{R}^2} K(X(a,t)-y)\theta(y,t)dy.
\end{equation}

Then the precise system of ODEs we will be studying is given by

$$\frac{dX}{dt} = F(X,\theta_0),$$

where 

\begin{equation}\label{F}
F (X,\theta_0)(a) =  \int_{\mathbb{R}^2} \frac{(X(a) - X(b))^\perp}{2\pi |X(a) - X(b)|^3}\theta_0(b)J_X(b)db
\end{equation}

$$ = \int_{\mathbb{R}^2} K(X(a) - X(b))\theta_0(b)J_X(b)db,$$

is the Riesz transform of $\theta_0$ when $X = \mbox{id}$ and $J_X(b)= \mbox{det} \nabla_bX(b)$. We also wish to obtain $\nabla_aF(X,\theta_0)(a)$. This follows in essentially the same manner as what is done by Constantin et al. \cite{Constantin14},

\begin{equation}\label{gradF}
\nabla_a F(X,\theta_0)= \nabla_a X(a,t)\int_{\mathbb{R}^2} K(X(a,t)-X(b,t))(\nabla_b^\perp X^\perp)(b,t)(\nabla_b\theta_0)(b)J_X(b)db.
\end{equation}

The fact that $F$ is well defined is nontrivial, but follows from the smoothness argument. We will also need the following estimates on compositions of functions in $C^{1,\gamma}(\mathbb{R}^2,\mathbb{R}^2)$. 

\begin{lemma}\label{complemma}

Let $X\in \mathcal{O}$, $Z\in C^{1,\gamma}(\mathbb{R}^2,\mathbb{R}^2)$. We have

\begin{equation}\label{compestimate}
||Z\circ X-id||_{1,\gamma} \leq C_1(1+ ||Z-id||_{1,\gamma}),
\end{equation}

where $C_1$ is determined entirely by $\mathcal{O}$.

\end{lemma}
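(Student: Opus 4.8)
The plan is to estimate the three pieces of $\|Z\circ X - \mathrm{id}\|_{1,\gamma}$ separately, using that $X\in\mathcal O$ controls $\|Y\|_{1,\gamma}$ (with $Y = X-\mathrm{id}$), that $\|\nabla X\|_{L^\infty}\le 1+c$, and that $X$ is a bi-Lipschitz homeomorphism with the chord-arc constant $\tfrac32$ established in the previous lemma. Write $W = Z-\mathrm{id}$, so $Z\circ X - \mathrm{id} = (X-\mathrm{id}) + W\circ X = Y + W\circ X$; by the triangle inequality in the $\|\cdot\|_{1,\gamma}$ norm it suffices to bound $\|W\circ X\|_{1,\gamma}$ by $C(1+\|W\|_{1,\gamma})$, since $\|Y\|_{1,\gamma}< c$ is already controlled by $\mathcal O$.

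First I would handle the sup-norm: $\|W\circ X\|_{L^\infty} = \|W\|_{L^\infty}\le \|W\|_{1,\gamma}$, trivially. Next the gradient term: by the chain rule $\nabla_a(W\circ X)(a) = (\nabla W)(X(a))\,\nabla X(a)$, so $\|\nabla(W\circ X)\|_{L^\infty}\le \|\nabla W\|_{L^\infty}\,\|\nabla X\|_{L^\infty}\le (1+c)\|W\|_{1,\gamma}$. The only real work is the H\"older seminorm $[\nabla(W\circ X)]_{C^\gamma}$. Writing $\nabla_a(W\circ X)(a)=(\nabla W)(X(a))\nabla X(a)$ as a product, I would use the standard bilinear estimate $[fg]_{C^\gamma}\le [f]_{C^\gamma}\|g\|_{L^\infty}+\|f\|_{L^\infty}[g]_{C^\gamma}$, yielding two terms: $[(\nabla W)\circ X]_{C^\gamma}\,\|\nabla X\|_{L^\infty}$ and $\|\nabla W\|_{L^\infty}\,[\nabla X]_{C^\gamma}$. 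The second is bounded by $\|W\|_{1,\gamma}\cdot[\nabla Y]_{C^\gamma}\le c\,\|W\|_{1,\gamma}$. For the first I use that precomposition with a Lipschitz map scales the H\"older seminorm by the Lipschitz constant to the power $\gamma$: $[(\nabla W)\circ X]_{C^\gamma}\le \mathrm{Lip}(X)^\gamma\,[\nabla W]_{C^\gamma}\le (1+c)^\gamma\|W\|_{1,\gamma}$. Collecting all terms gives $\|W\circ X\|_{1,\gamma}\le C_1'\|W\|_{1,\gamma}$ with $C_1'$ depending only on $c$, $\gamma$ (hence only on $\mathcal O$), and then $\|Z\circ X-\mathrm{id}\|_{1,\gamma}\le \|Y\|_{1,\gamma}+\|W\circ X\|_{1,\gamma}\le c + C_1'\|W\|_{1,\gamma}\le C_1(1+\|Z-\mathrm{id}\|_{1,\gamma})$.

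The main obstacle is purely bookkeeping: keeping track of which constants are allowed to depend on $\mathcal O$ (i.e., on $c$ and $\gamma$) versus on $Z$, and making sure the H\"older-seminorm-of-a-composition bound $[(\nabla W)\circ X]_{C^\gamma}\le\mathrm{Lip}(X)^\gamma[\nabla W]_{C^\gamma}$ is applied with the correct Lipschitz constant for $X$ — which is exactly $\|\nabla X\|_{L^\infty}\le 1+c$, already controlled on $\mathcal O$. No blow-up of $X$ (the chord-arc lower bound) is needed for this particular estimate; it will be needed elsewhere for estimating $F$ itself. I would also remark that the same computation shows $\|Z\circ X\|_{1,\gamma}$-type quantities behave continuously, which is what is ultimately needed to feed into the smoothness argument for $F$ via Theorem~\ref{Langtheorem}.
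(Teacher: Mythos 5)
Your proposal is correct and follows essentially the same route as the paper: write $Z = \mathrm{id} + W$, split $Z\circ X - \mathrm{id} = Y + W\circ X$, and bound the sup, gradient, and H\"older pieces separately with constants depending only on $c$ (hence on $\mathcal{O}$). In fact you supply the chain-rule, product, and H\"older-composition estimates that the paper's proof only asserts implicitly ``for some constant $C$,'' so your write-up is a more detailed version of the same argument.
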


\begin{proof}

By definition,

$$||Z\circ X-id||_{1,\gamma} = ||Z\circ X-id||_\infty + ||\nabla (Z\circ X)-I||_\infty + |\nabla (Z\circ X) |_\gamma,$$ 

where $I$ is the identity matrix. There exists $W\in C^{1,\gamma}(\mathbb{R}^2)\times C^{1,\gamma}(\mathbb{R}^2)$ such that $Z = id+W$. Hence for the first term above,

\begin{equation}\label{firstcompterm}
||Z\circ X-id||_\infty = ||X + W\circ X-id||_\infty \leq c + ||Z-id||_\infty,
\end{equation}

where we recall that $c =\frac{7}{20}$. For the second term,

\begin{equation}\label{secondcompterm}
||\nabla_a(Z\circ X)(a)-I||_\infty = ||\nabla_a(X+W\circ X)(a)-I||_\infty \leq c +  ||\nabla_a(W\circ X)(a)||_\infty\leq C(1+||\nabla Z-I||_\infty)
\end{equation}

for some constant C. Similarly, for the third term,

\begin{equation}\label{thirdcompterm}
|\nabla_a(Z\circ X)(a)|_\gamma \leq C(1+|\nabla_aZ(a)|_\gamma).
\end{equation}

Combining \eqref{firstcompterm}, \eqref{secondcompterm}, and \eqref{thirdcompterm} gives us the claim.

\end{proof}

We will also need a sense of how $F$ behaves under composition of functions.

\begin{lemma}\label{Fcomp}

Let $Y\in \mathcal{O}$. Then 

$$F(X,\theta_0) \circ Y = F(X\circ Y,\theta_0\circ Y).$$

\end{lemma}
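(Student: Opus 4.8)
The plan is to verify the identity directly from the integral formula \eqref{F}, by performing the change of variables $b\mapsto Y(b)$ in the defining integral and using the multiplicativity of the Jacobian under composition. Throughout I interpret $F(\cdot,\cdot)$ via the formula \eqref{F}, which is meaningful for any bi-Lipschitz orientation-preserving $C^{1,\gamma}$ diffeomorphism in the first slot together with a Hölder-continuous $W^{1,1}$ density in the second; both $X$ and $X\circ Y$ are diffeomorphisms of this type, and $\theta_0\circ Y$ again lies in $C^{1,\gamma}\cap W^{1,1}$, so both sides of the asserted equation make sense.

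First I would record the elementary facts about $Y\in\mathcal{O}$ that legitimize the substitution: by Lemma \ref{Xhomeo}, $Y$ is a homeomorphism of $\mathbb{R}^2$; by the preceding chord--arc lemma it is bi-Lipschitz with constant $\tfrac32$; and since $\inf_a \det\nabla_a Y(a) > \tfrac{9}{10} > 0$, the map $Y$ is an orientation-preserving $C^1$ diffeomorphism, so the change-of-variables formula applies with the positive Jacobian factor $J_Y(c) = \det\nabla_c Y(c)$. Then I would expand
$$(F(X,\theta_0)\circ Y)(a) = \mathrm{p.v.}\int_{\mathbb{R}^2} K\bigl(X(Y(a)) - X(b)\bigr)\,\theta_0(b)\,J_X(b)\,db,$$
whose integrand is singular at $b = Y(a)$, substitute $b = Y(c)$ so that $db = J_Y(c)\,dc$ and the singular set becomes $c = a$, and apply the chain rule $\nabla_c(X\circ Y)(c) = (\nabla X)(Y(c))\,\nabla_c Y(c)$, whose determinant gives $J_{X\circ Y}(c) = J_X(Y(c))\,J_Y(c)$. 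This turns the integral into
$$\mathrm{p.v.}\int_{\mathbb{R}^2} K\bigl((X\circ Y)(a) - (X\circ Y)(c)\bigr)\,(\theta_0\circ Y)(c)\,J_{X\circ Y}(c)\,dc,$$
which is exactly $F(X\circ Y,\theta_0\circ Y)(a)$ by \eqref{F}, proving the lemma.

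The one genuine obstacle is that \eqref{F} is a principal-value integral rather than an absolutely convergent one, so the change of variables must be justified at the singularity. I would handle this with the standard regularization: writing $\theta_0 J_X = \bigl[\theta_0 J_X - (\theta_0 J_X)(Y(a))\bigr] + (\theta_0 J_X)(Y(a))$, the first bracket yields an absolutely convergent integrand of size $O(|b - Y(a)|^{\gamma-2})$, since $K$ is homogeneous of degree $-2$ and $\theta_0 J_X$ is Hölder continuous, to which ordinary change of variables for $L^1$ integrands applies verbatim; the contribution of the constant second term is the limit of $\int K(X(Y(a)) - X(b))\,db$ over shrinking complements of balls about $Y(a)$, and because the chord--arc bound forces these excised regions, and likewise their preimages under $c\mapsto Y(c)$, to be sandwiched between balls of comparable radii whose rescalings converge to regions symmetric about the singularity, the oddness of $K$ makes this limit vanish. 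Equivalently, the two prescriptions for truncating the principal value (excising balls in the $b$-variable versus balls in the $c$-variable) agree in the limit. Alternatively, one may simply observe that this is precisely the computation carried out by Constantin--Vicol--Wu \cite{Constantin14} in the volume-preserving case, which is unaffected by passing to the mildly compressible setting of $\mathcal{O}$. Finally, I note that the same substitution applied to \eqref{gradF} yields the analogous transformation rule for $\nabla_a F$ under composition, which is what will actually be used when differentiating $F$ along curves of the form $X\circ Y$.
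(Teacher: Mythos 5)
Your proof is correct and follows the same route as the paper's: the substitution $b = Y(s)$ together with the multiplicativity $J_{X\circ Y} = (J_X\circ Y)\,J_Y$ is exactly the paper's one-line argument. The additional care you take in justifying the change of variables at the principal-value singularity (showing the two truncation prescriptions agree in the limit) is a point the paper passes over silently, and is a worthwhile addition rather than a deviation.
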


\begin{proof}

$$F(X,\theta_0) \circ Y = \int_{\mathbb{R}^2} \frac{(X(Y(a))-X(b))^\perp}{2\pi|X(Y(a))-X(b)|^3}\theta_0(b)J_X(b)db.$$

Let $b = Y(s)$, for $b,s\in \mathbb{R}^2$. Then

$$=  \int_{\mathbb{R}^2} \frac{(X(Y(a))-X(Y(s)))^\perp}{2\pi|X(Y(a))-X(Y(s))|^3}\theta_0(Y(s))J_X(Y(s))J_Y(s)ds =  F(X\circ Y,\theta_0\circ Y). $$

\end{proof}

\subsection{Smoothness of the ODE}
Since $\mathcal{U}$ is an open subset of the affine space $\mathbf{M}$, we can demonstrate smoothness of $F$ by showing that the operator norms of its partial derivatives, $d_{X}^n(F)$ and $d_{\theta_0}^n(F)$, are bounded in some uniform way on $\mathcal{U}$. As discussed above, we show that $F$ is smooth by adapting the argument made by Constantin et al. \cite{Constantin14}. The theorem of Lang can be used after the following theorem:

\begin{theorem}

$F$ is infinitely Fr\'{e}chet differentiable on $\mathcal{U}$.

\end{theorem}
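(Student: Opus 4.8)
The plan is to establish smoothness of $F: \mathcal{U} \to C_b^{1,\gamma}(\mathbb{R}^2,\mathbb{R}^2)$ by exhibiting explicit formulas for all the partial Fr\'echet derivatives $d_X^j d_{\theta_0}^k F$ and showing that each is a bounded multilinear operator with norm controlled uniformly over $\mathcal{U}$. First I would note that $F$ is affine-linear in $\theta_0$ only through the product $\theta_0(b) J_X(b)$, so $d_{\theta_0} F$ is literally the same kernel operator with $\theta_0$ replaced by the increment, $d_{\theta_0}^2 F = 0$, and the whole difficulty is concentrated in differentiating in the $X$-slot. Differentiating \eqref{F} formally in $X$ in a direction $h \in C_b^{1,\gamma}$, one gets a sum of terms in which the singular kernel $K(X(a)-X(b)) = \frac{(X(a)-X(b))^\perp}{2\pi|X(a)-X(b)|^3}$ is differentiated: each derivative of $K$ raises the order of the singularity by one (so after $j$ derivatives one is looking at a kernel homogeneous of degree $-(2+j)$ in $X(a)-X(b)$), but each such term is multiplied by a corresponding number of factors $(h(a)-h(b))$, which — crucially — are controlled by $\|\nabla h\|_{L^\infty}|a-b| \le \lambda \|\nabla h\|_{L^\infty}|X(a)-X(b)|$ using the chord-arc bound $\lambda = 3/2$ from the second Lemma. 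This is the mechanism that keeps every integrand only weakly singular (degree $-2$ overall, i.e. exactly borderline) and it is the same mechanism Constantin--Vicol--Wu use; I would simply organize it inductively.

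Concretely, the key steps in order are: (1) record the chord-arc and Jacobian bounds valid on $\mathcal{O}$ (already done in the two Lemmas above, plus $J_X \ge 9/10$ and $J_X \le (1+c)^2$, and $\|\nabla X^{-1}\|_\infty \le 3/2$); (2) write the $n$-th total derivative $d^n F[(h_1,\sigma_1),\dots,(h_n,\sigma_n)]$ via the Fa\`a-di-Bruno-type expansion of $K(X(a)-X(b))$ composed with the multilinear structure, getting a finite sum of terms each of the schematic form
$$
\int_{\mathbb{R}^2} (D^m K)(X(a)-X(b))\big[(h_{i_1}(a)-h_{i_1}(b)),\dots\big]\,\cdot\,(\text{product of }\sigma\text{'s and }\nabla h\text{'s via }J_X)\,db;
$$
(3) for each such term, pull out the $(h(a)-h(b))$ differences against the $|a-b|$ growth, absorb $|a-b| \le \frac{3}{2}|X(a)-X(b)|$, and observe the net kernel is homogeneous of degree $-2$ and the angular part has mean zero, so the principal-value integral converges and defines a bounded operator from the relevant product of $C_b^{1,\gamma}$ and $C^{1,\gamma}\cap W^{1,1}$ spaces into $C_b^{1,\gamma}$; (4) handle the gradient part $\nabla_a F$, whose formula \eqref{gradF} has the identical structure with one extra factor $\nabla X$ and one factor $\nabla\theta_0$ (here is where $W^{1,1}$ enters, exactly as in \eqref{gradF}), and check the H\"older seminorm $[\,\cdot\,]_{C^\gamma}$ bound by the standard splitting of the domain of integration into $|a-b| \lesssim |a-a'|$ and its complement; (5) verify the formal derivatives are the actual Fr\'echet derivatives by a routine difference-quotient/remainder estimate, and conclude $F \in C^\infty(\mathcal{U})$, so Theorem~\ref{Langtheorem} applies.

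The main obstacle is step (3)–(4): showing that after every differentiation in $X$ the resulting principal-value singular integral operator is bounded on $C_b^{1,\gamma}$ with a constant depending only on $\mathcal{O}$ (through $c$, $\lambda=3/2$, $\gamma$) and not on the base point $X$. The borderline homogeneity means one cannot be cavalier: convergence of the principal value relies on a cancellation coming from the odd symmetry of $D^m K$ paired against the leading-order linear-in-$(a-b)$ part of $X(a)-X(b)$ and $h(a)-h(b)$, and the remainders (differences minus their linearizations) are what the $C^\gamma$ regularity of $\nabla X$ and $\nabla h$ are used to control. Estimating the H\"older seminorm of the output requires the usual two-region argument and is the most computation-heavy part, though it is entirely parallel to the analyticity estimate \eqref{Xestimate} of Constantin--Vicol--Wu; indeed the only genuinely new bookkeeping compared to their paper is tracking the $\theta_0$-slot (trivial, since $F$ is affine there) and confirming that all constants are uniform over the fixed open set $\mathcal{U}$ rather than along a single solution.
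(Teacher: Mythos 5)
Your outline is sound, but it takes a genuinely different route from the paper. You propose to differentiate the explicit kernel formula \eqref{F} directly, writing each partial derivative $d_X^nF$ as a multilinear singular integral in which $D^mK(X(a)-X(b))$ is paired with difference factors $h_i(a)-h_i(b)$, and to estimate every such operator in $C^{1,\gamma}$ directly on $\mathcal{O}$ using the chord-arc bound \eqref{chordarc}, the Jacobian bounds, the leading-order odd symmetry of the degree $-2$ kernel, and the two-region H\"{o}lder argument, with linearity in $\theta_0$ finishing the job. The paper instead avoids redoing any singular-integral analysis: it takes the Constantin--Vicol--Wu time-derivative estimate \eqref{mainbound} along SQG solutions as a black box, expands $\partial_t^n F(X(t),\theta_0)|_{t=0}$ by the Fa\'{a} di Bruno formula, extracts bounds on $d_X^nF$ at the identity by induction (subtracting the bounded lower-order terms) and polarization, and then transfers these bounds to a general $X\in\mathcal{O}$ by right translation via Lemmas \ref{complemma} and \ref{Fcomp} rather than by uniform kernel estimates. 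What your approach buys is self-containedness and genuinely pointwise-in-$X$ bounds (no transference step is needed, since the chord-arc constant $3/2$ and the Jacobian bounds hold on all of $\mathcal{O}$); what it costs is that essentially all of the Constantin--Vicol--Wu machinery --- the cancellation for the borderline degree $-2$ kernels, the remainders controlled by $[\nabla X]_{C^\gamma}$ and $[\nabla h]_{C^\gamma}$, the role of $W^{1,1}$ decay as in \eqref{gradF}, and the H\"{o}lder-seminorm splitting --- must be carried out for arbitrary multilinear derivatives rather than quoted, and that is where the real length of the argument lies. Two points of care: the operator norm of $d_X^nF$ at $(X,\theta_0)$ grows linearly in $\|\theta_0\|_{C^{1,\gamma}\cap W^{1,1}}$, so it is locally (not literally uniformly) bounded on $\mathcal{U}$ --- which is all that smoothness and Theorem \ref{Langtheorem} require, but the claim should be stated that way; and your step (5), identifying the formal derivatives with the actual Fr\'{e}chet derivatives, needs the same singular-integral estimates applied to the Taylor remainder, so it belongs inside the induction rather than being labeled routine.
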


\begin{proof}

The idea is the following, we wish to obtain a bound on $||d_{X}^nF||_{\mathcal{L}^n}$ (where $\mathcal{L}^n$ is the corresponding space of multilinear maps):

$$||d_{X}^nF(X_1,...,X_n)||_{1,\gamma} \leq ||d_{X}^nF||_{\mathcal{L}^n} \cdot ||X_1||_{1,\gamma}\cdots ||X_1||_{1,\gamma} $$
$$=  ||d_{X}^nF||_{\mathcal{L}^n}, $$

for all $X_i \in \partial B_1(0) \subset \mathbf{E}_1$ where $||d_{X}^nF||_{\mathcal{L}^n}$ is independent of $X$. Now, if $X(t)$ is a solution to SQG with initial condition $\theta_0$ and with $J_X = 1$, Constantin et al. \cite{Constantin14} estimates (for our purposes) for $n\geq0$:

\begin{equation}\label{mainbound}
||\left.\partial_t^{n+1}\right|_{t=0}X(t) ||_{1,\gamma}= ||\left.\partial_t^n\right|_{t=0} F(X(t), \theta_0) ||_{1,\gamma}\leq C n! R^{-n},
\end{equation}

where $R = R(||\theta_0||_{C^{1,\gamma}\cap W^{1,1}},\gamma,\lambda)>0$ and $C$ is another constant. This gives us that the incompressible particle trajectories are analytic in time. The first point is that, as Constantin et al. \cite{Constantin14} mentions, this argument can be extended for $J_X\neq 1$. The terms then involve a Jacobian and its time derivatives, but these are bounded, hence the same estimates go through but with modified constants. We now show that this bound also gives us that $F$ is smooth in its $X$ component, i.e. it provides the desired bound on $||d_{X}^nF||_{\mathcal{L}^n}$. In the case $n=0$, bound \eqref{mainbound} gives us that $F$ is well defined at the identity, i.e. 

$$|| F(id, \theta_0) ||_{1,\gamma}\leq CR^{-1}. $$

Away from the identity, we make use of lemmas \ref{complemma} and \ref{Fcomp} to obtain the desired bound. We will now proceed by induction. The idea is the following: using the multivariate Fa\'{a} di Bruno formula we can expand $\partial_t^n F(X(t), \theta_0)|_{t=0} $, note in particular that the last term is $d_{X}^nF(X_1,...,X_1)$ where $X_1 = \left.\partial_t\right|_{t=0}X(t)$. Assuming that the bound holds in the case $n-1$, we can subtract out bounded lower order terms from $\partial_t^n F(X(t), \theta_0)|_{t=0} $ to obtain that $d_{X}^nF(X_1,...,X_1)$ is bounded. One can then obtain a bound on the full operator $d_{X}^nF(X_1,...,X_n)$ by polarization. This gives us smoothness at $X = id$. We will then use lemmas \ref{complemma} and \ref{Fcomp} to obtain smoothness for any $Y\in \mathcal{O}$. Here we do this explicitly for the case $n=2$. Suppose that $X(t)$ is a smooth curve in $\mathcal{O}$ such that $X(0)= X$ and $\left.\partial_t\right|_{t=0}X(t) = X_1$, with $||X_1||_{1,\gamma} = 1$, and such that $X$ is a solution to SQG. We have, 

$$\left.\partial_t^{3}\right|_{t=0}X(t) = \left.\partial_t^{2}\right|_{t=0} F(X(t),\theta_0) $$

$$ = (d^2F)_X(X_1,X_1) + (dF)_X(\tilde{X}_2),$$

where $\tilde{X}_j = \left.\partial_t^j\right|_{t=0}X(t)$, hence we may write

$$(d^2F)_X(X_1,X_1) = \left.\partial_t^{3}\right|_{t=0}X(t)-(dF)_X(\tilde{X}_2).$$

If $X = id$, then by \eqref{mainbound} and the inductive hypothesis we have that 

$$ ||(d^2F)_X(X_1,X_1)||_{1,\gamma} \leq C(R^{-1}+ 2R^{-2}).$$

Now, to obtain $(d^2F)_X(X_1,X_2)$ for any other $X_2$ we use polarization to obtain that

$$||(d^2F)_X(X_1,X_2)||_{1,\gamma} = \frac{1}{2}||(d^2F)_X(X_1+X_2,X_1+X_2)-dF_X(X_1,X_1)- dF_X(X_2,X_2)||_{1,\gamma}$$
$$\leq 2C(R^{-1}+ 2R^{-2}).$$

Now, if $Z\in \mathcal{O}$, one can verify, in a manner similar to lemma \ref{Fcomp}, that 

$$\left(\partial_t^2|_{t=0} F(X(t),\theta_0)\right)\circ Z= \left(\partial_t^2|_{t=0} F(X(t),\theta_0)\circ Z\right).$$

This then gives us that:

$$(d^2F)_{X\circ Z}(X_1\circ Z,X_2\circ Z) = d^2F_X(X_1,X_2)\circ Z.$$

By lemma \ref{complemma} we obtain the desired bound.\\
\\
Finally, we note that if $||\theta_0||_{C^{1,\gamma}\cap W^{1,1}}= 1$, then we have a bound $F(X,\theta_0) \leq C$. Since $F$ is linear in $\theta_0$, this gives us that $F$ is a bounded linear operator in $\theta_0$. Hence $F$ is smooth in $\theta_0$ and the Riemannian exponential map is smooth by Lang's Theorem \ref{Langtheorem}.

\end{proof}

\section{Non-Fredholmness of the Riemannian Exponential Map}
Preston  \cite{Preston02} and Rouchon \cite{Rouchon91} demonstrated that just as the geodesic equation on the volumorphism group in the $L^2$ metric splits in the Lie algebra, so does the Jacobi equation. Here, we begin by citing the following proposition from Misio\l{}ek-Preston \cite{Misiolek09}:

\begin{proposition}[Misio\l{}ek-Preston \cite{Misiolek09}]

Suppose $G$ is any Lie group with a (possibly weak) right-invariant metric. Let $\eta(t)$ be a smooth geodesic with $\eta(0) =e$ and $\dot{\eta}(0)=u_0$. Then, every proper Jacobi field $J(t)$ (such that $J(0)=0$) along $\eta$ satisfies the following system of equations on $T_eG$:

\begin{align}
\frac{dY}{dt}-\mbox{ad}_XY=Z \label{linearizedflow}\\
\frac{dZ}{dt}+\mbox{ad}_X^*Z+\mbox{ad}_Z^*X=0, \label{linearizedeuler}
\end{align}

where $J(t) = dR_{\eta(t)}Y(t)$, $\dot{\eta}(t) = dR_{\eta(t)}X(t)$, $Y(0)=0$, and $Z(0)=0$. 
\end{proposition}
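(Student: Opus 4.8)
The plan is to derive the two-component Jacobi system by linearising the geodesic flow on $G$, exploiting the fact that everything can be transported to the Lie algebra $T_eG$ by right translation. Write $\eta(t)$ for the geodesic with $\eta(0)=e$, $\dot\eta(t)=dR_{\eta(t)}X(t)$, where $X(t)$ solves the Euler--Arnold equation $\dot X = -\mathrm{ad}^*_X X$. Consider a variation $\alpha(s,t)$ of geodesics with $\alpha(0,t)=\eta(t)$ and $\alpha(s,0)=e$ for all $s$, and set $V(s,t)=dR_{\alpha(s,t)^{-1}}\partial_t\alpha(s,t)$ (the right-logarithmic derivative in $t$) and $W(s,t)=dR_{\alpha(s,t)^{-1}}\partial_s\alpha(s,t)$. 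The Jacobi field is $J(t)=\partial_s\alpha(0,t)=dR_{\eta(t)}Y(t)$ with $Y=W(0,\cdot)$, and since $\alpha(s,0)=e$ we get $Y(0)=0$.

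The first equation comes purely from the flatness of the ``mixed partials'' identity in a Lie group: the right-logarithmic derivatives of a two-parameter family satisfy $\partial_t W - \partial_s V = -[V,W]$ (the Maurer--Cartan structure equation, with a sign depending on convention), i.e. $\partial_t W - \partial_s V = \mathrm{ad}_V W$. Evaluating at $s=0$, writing $Z(t):=\partial_s V(0,t)$, gives exactly $\frac{dY}{dt}-\mathrm{ad}_X Y = Z$; and since $\alpha(s,0)=e$ forces $V(s,0)=0$, we get $Z(0)=0$. The second equation comes from differentiating the Euler--Arnold equation $\partial_t V = -\mathrm{ad}^*_V V$ in $s$ at $s=0$: the left side gives $\partial_t Z$, and the right side, using bilinearity of $(a,b)\mapsto \mathrm{ad}^*_a b$, gives $-\mathrm{ad}^*_Z X - \mathrm{ad}^*_X Z$, so $\frac{dZ}{dt} + \mathrm{ad}^*_X Z + \mathrm{ad}^*_Z X = 0$. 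I would state at the outset that for a weak metric one must assume the geodesic $\eta$ (and the variation) is smooth — which is the hypothesis — so that all the transported quantities $X,Y,Z$ are genuine curves in $T_eG$ and the manipulations are legitimate; the $\mathrm{ad}^*$ operator is the one defined on $\mathfrak g$ via the metric, matching the Remark in the excerpt.

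The main obstacle, and the point that needs care rather than cleverness, is justifying the identification of the classical Jacobi equation $\nabla_{\dot\eta}\nabla_{\dot\eta}J + R(J,\dot\eta)\dot\eta = 0$ with this first-order system — equivalently, checking that the variational field of a family of geodesics, once pushed to the Lie algebra, really does satisfy the displayed equations, and conversely that a solution of the system with $Y(0)=Z(0)=0$ gives a proper Jacobi field. Concretely this requires knowing that the covariant derivative of a right-invariant metric, expressed in the algebra, is the bilinear operator $B$ with $\langle B(a,b),c\rangle + \langle b, \mathrm{ad}_a c\rangle = 0$ type relations, so that $\nabla_{\dot\eta}J$ transported to $T_eG$ is $\frac{dY}{dt} - \frac12(\ldots)$ — but rather than re-derive the curvature, the clean route is the one above: take $J$ to be an honest variation through geodesics, transport, and use only (i) the Maurer--Cartan identity for two-parameter families and (ii) $s$-differentiation of the Euler--Arnold equation. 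That this captures \emph{all} proper Jacobi fields follows because proper Jacobi fields are exactly such variational fields (standard on a manifold where the exponential map is defined, which here is guaranteed for smooth geodesics), so no genericity issue arises. I would cite Misio\l ek--Preston for the detailed verification and present the derivation as a sketch.
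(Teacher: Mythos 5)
Your derivation of the two equations is sound, and it is essentially the argument of Misio\l{}ek--Preston that the paper simply cites (the paper contains no proof of this proposition): right-trivialize a variation of $\eta$ through geodesics fixing the initial point, obtain \eqref{linearizedflow} from the Maurer--Cartan identity relating the two right-logarithmic derivatives, and obtain \eqref{linearizedeuler} by differentiating the Euler--Arnold equation in the variation parameter. Your cautions about sign conventions for $\mathrm{ad}$ and about why geodesic variations capture all proper Jacobi fields are appropriate for a sketch with a citation.

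However, the step justifying the initial conditions is wrong, and it matters. From $\alpha(s,0)=e$ you may conclude $\partial_s\alpha(s,0)=0$, hence $W(s,0)=0$ and $Y(0)=0$; you may \emph{not} conclude $V(s,0)=0$. The quantity $V(s,0)=\partial_t\alpha(s,0)$ is the initial velocity of the $s$-th geodesic, and it must vary with $s$ (for the standard variation it is $u_0+s\,\dot J(0)+O(s^2)$), precisely because a proper Jacobi field is generated by varying the initial velocity while holding the initial point fixed. Hence $Z(0)=\partial_s V(s,0)\vert_{s=0}=\dot J(0)$, which is generally nonzero. Indeed, if $Y(0)=0$ and $Z(0)=0$ both held, uniqueness for the linear system \eqref{linearizedflow}--\eqref{linearizedeuler} would force $Z\equiv 0$ and then $Y\equiv 0$, so only the trivial Jacobi field would remain; and the paper's own application in the non-Fredholmness proof takes $Z(0)\neq 0$ (the constants $C_{nm}$ there are nonzero). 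The condition ``$Z(0)=0$'' in the quoted statement is evidently a misprint for $Z(0)=\dot J(0)$ (equivalently, $Z(0)$ arbitrary, parametrizing the proper Jacobi fields), and your argument should be corrected to say this rather than bent into producing the misprint via the false claim $V(s,0)=0$.
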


The first equation is the linearized flow equation, while the second is the linearized Euler equation. Here we find some explicit solutions to give us non-Fredholmness. We now let $\mathbb{S}^2$ denote the standard two-sphere. In this section we demonstrate the following:

\begin{theorem}

The Riemannian exponential map on $\mathcal{D}_\mu(\mathbb{S}^2)$ in the $\dot{H}^{-1/2}$ inner product is non-Fredholm.  

\end{theorem}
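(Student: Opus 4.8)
The plan is to exhibit an explicit stationary (steady) solution of SQG on $\mathbb{S}^2$ — a rigid rotation — and analyze the Jacobi equation along the corresponding geodesic $\eta(t)$ on $\mathcal{D}_\mu(\mathbb{S}^2)$. Concretely, I would take $\theta_0$ to be (a multiple of) the first spherical harmonic $z = \cos\phi$, so that the associated stream function $\psi$ is an eigenfunction of the Laplacian and the velocity field $u_0 = \nabla^\perp\psi$ generates rotation about the polar axis. Because $\theta_0$ is an eigenfunction of $A = (-\Delta)^{-1/2}$ and the transport of an eigenfunction by its own rotational flow is trivial, this is a steady solution, hence $\eta(t)$ is a one-parameter subgroup and the coefficient operators in the split Jacobi equations \eqref{linearizedflow}--\eqref{linearizedeuler} are time-independent. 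This reduces the Jacobi equation to a linear constant-coefficient system on $T_e\mathcal{D}_\mu(\mathbb{S}^2)$.

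Next I would diagonalize that system using spherical harmonics. Expanding $Y$ and $Z$ in the basis of stream functions given by spherical harmonics $Y_{\ell}^m$, the operators $\mathrm{ad}_X$, $\mathrm{ad}_X^*$, and $\mathrm{ad}_Z^*X$ act within each fixed-$\ell$ block and essentially multiply by $m$ (the azimuthal wavenumber) up to $\ell$-dependent scalars coming from the $\dot H^{-1/2}$ inertia operator (eigenvalue $\sim (\ell(\ell+1))^{-1/2}$). Within each such finite-dimensional block the Jacobi system becomes a small ODE whose solution I can write down explicitly in terms of trigonometric functions of $t$ with frequencies depending on $\ell$ and $m$. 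I then examine the time-one solution map $\mathbf{Y}(0)=0 \mapsto \mathbf{Y}(1)$: a conjugate point at $t=1$ corresponds to this map failing to be invertible. The goal is to choose the rotation speed so that infinitely many of the per-mode frequencies make the corresponding block-solution vanish at $t=1$ (epiconjugacy: the image is dense but not closed / not all of $T_e$), while checking that no nonzero Jacobi field itself vanishes at $t=1$ for the relevant modes in a way that would make the point monoconjugate in Grossman's sense. Following Ebin--Misio\l{}ek--Preston \cite{Ebin06} and Misio\l{}ek--Preston \cite{Misiolek09}, the existence of an epiconjugate point that is not monoconjugate is exactly an obstruction to Fredholmness of the exponential map.

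In more detail, the key steps in order are: (1) verify that $u_0$ coming from a degree-one spherical harmonic is a steady SQG solution on $\mathbb{S}^2$, so the geodesic is a rotation subgroup; (2) write the split Jacobi equations \eqref{linearizedflow}--\eqref{linearizedeuler} with their now-constant coefficients, and compute the matrix entries of $\mathrm{ad}_X$ and $\mathrm{ad}^*_X$ in the spherical-harmonic basis, keeping careful track of the $\dot H^{-1/2}$ weights; (3) solve the resulting decoupled finite blocks explicitly and extract the eigenvalues/frequencies $\omega_{\ell,m}$ of the time-one solution operator; (4) show that the time-one solution operator has trivial cokernel closure issues — i.e. it is injective with dense but non-closed range, or has infinite-dimensional cokernel — by analyzing the asymptotics of $\omega_{\ell,m}$ as $\ell\to\infty$; and (5) invoke the characterization of Fredholmness via conjugate points to conclude. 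I expect step (3)--(4) to be the crux: one must show the spectrum of the monodromy operator accumulates at a point of the unit circle (or at $0$) in the precise way that produces a non-Fredholm point, and simultaneously rule out monoconjugacy, which requires a delicate count of which modes actually produce vanishing Jacobi fields versus merely vanishing solution-operator determinants. The negative order of the metric makes the inertia weights decay, which is what should force the failure of the finite-dimensional-kernel / closed-range conditions that hold in the $L^2$ (2D Euler) case.
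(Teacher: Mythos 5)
Your setup is the paper's: the background geodesic is the rigid rotation generated by the degree-one harmonic (the paper takes the stream function $f=-\cos\phi$, so that $\sqrt{-\Delta}\,\cos\phi=\sqrt{2}\cos\phi$ and the flow is steady), the Jacobi equation is split via \eqref{linearizedflow}--\eqref{linearizedeuler}, and the system is diagonalized in spherical harmonics, where each $(n,m)$ block is an explicitly solvable scalar ODE: the paper gets $h_{nm}(t)=C_{nm}\exp\bigl(i(\sqrt{2}-\lambda_n)mt/\lambda_n\bigr)$ with $\lambda_n=\sqrt{n(n+1)}$, and the flow component $g_{nm}$ with $g_{nm}(0)=0$ vanishes again at $t_{nm}=2\pi\sqrt{n(n+1)}/(\sqrt{2}\,m)$. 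Up through your steps (1)--(2) you are on the same track as the paper.

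The gap is precisely at the step you yourself flag as the crux, and the mechanism you propose there is internally inconsistent. If you tune the rotation speed so that infinitely many per-block solutions with $Y(0)=0$ vanish again at $t=1$, then infinitely many proper Jacobi fields vanish at $t=1$; that is monoconjugacy (indeed with infinite-dimensional kernel), and it contradicts your simultaneous requirement that no nonzero Jacobi field vanish at $t=1$. Moreover, arranging exact simultaneous vanishing of infinitely many modes is a Diophantine condition (you would need $\sqrt{n(n+1)}/m$ to take a single value infinitely often, e.g.\ via $n(n+1)=2m^2$), which you never address; and the insistence on $t=1$ buys nothing, since non-Fredholmness follows from failure of $d\exp$ at any point along any geodesic (or by rescaling $u_0$). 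The paper's resolution is simpler and you should adopt it: no tuning, just record the zero times $t_{nm}$ and observe that $t_{nn}\to 2\pi/\sqrt{2}$ as $n\to\infty$, so monoconjugate locations along this single geodesic cluster at a finite time; the cluster point is hit by no mode exactly (since $\sqrt{n(n+1)}$ is never an integer $m$), so it is epiconjugate but not monoconjugate in Grossman's sense \cite{Grossman65}, and a Fredholm exponential map cannot have such a point (its monoconjugate points would be isolated with finite multiplicity, cf.\ \cite{Ebin06,Misiolek09}). To repair your write-up, carry out your steps (3)--(4) explicitly as above and replace the ``exact vanishing at $t=1$'' device by the clustering argument.
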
 

\begin{proof}
We will let $X = \nabla^\perp f$, $Y= \nabla^\perp g$, and $Z = \nabla^\perp h$. Note then that  \eqref{linearizedflow} and \eqref{linearizedeuler} give us:
\begin{equation}\label{linflowstream}
g_t + \{f,g\} = h,
\end{equation}

\begin{equation}\label{lineulerstream}
\psi_t + \frac{1}{\sin \phi}\left(f_\phi \psi_r-f_r \psi_\phi\right)+\frac{1}{\sin \phi}\left(h_\phi \theta_r-h_r \theta_\phi \right) = 0,
\end{equation}

respectively, where $\psi = \sqrt{-\Delta}(h)$ . Here $f$ (and hence $\theta$) will be determined by a solution to SQG. We must impose the condition that $g(0) = 0$, so that we have a proper Jacobi field. Let $f = -\cos \phi$. Then, since $\Delta f = 2 \cos \phi$ we have that

$$A(\cos(\phi)) = \sqrt{-\Delta}(\cos \phi) = \sqrt{2} \cos \phi.$$

We note that $X$ and $\theta = \nabla \times A(X)$ solve the SQG equation in spherical coordinates. Then, we let

$$h = \sum h_{nm}(t)\xi_{nm}(\phi)e^{imr}$$

where $\xi_{nm}(\phi)e^{imr}$ is an eigenfunction of $\Delta$:

$$\Delta \xi_{nm}(\phi)e^{imr} = -\lambda_{n}^2\xi_{nm}(\phi)e^{imr},$$

and $\lambda_{n} = \sqrt{n(n+1)}$ with $-n\leq m \leq n$. The solution to \eqref{lineulerstream} is

$$h_{nm}(t) = C_{nm}\cdot \mbox{Exp}\lbrack\frac{i(\sqrt{2}-\lambda_{n})}{\lambda_{n}}mt\rbrack.$$ 

Solving \eqref{linflowstream} for $g$ we obtain 

$$g_{nm}(t) =\frac{-iC_{nm}}{(1+a_{n})m}e^{-int}\left(e^{i(1+a_{n})mt}-1\right),$$

where 

$$a_{n} = \frac{(\sqrt{2}-\lambda_{n})}{\lambda_{n}}.$$

$g_{nm}(t)$ will be zero at 

$$t_{nm}=\frac{2\pi}{(a_{n}+1)m}=\frac{2\pi\sqrt{n(n+1)}}{\sqrt{2}m}.$$

This gives us that 

$$\lim\limits_{n\rightarrow \infty} t_{nn} = \frac{2\pi}{\sqrt{2}}.$$

Thus we have a clustering of conjugate points at $t = \frac{2\pi}{\sqrt{2}}$. So this is a point that is epiconjugate, but not monoconjugate, hence the map is not Fredholm.

\end{proof}

\section{The Sign and Magnitude of the Sectional Curvature}
For a Lie group $G$ with right invariant metric $\left<\left<\cdot ,\cdot \right>\right>$ the non-normalized sectional curvature at the identity in directions $u$ and $v$ is given (as in Arnold \cite{Arnold66}) by

$$\overline{K}(u,v)=\left< \left< R(u,v)v,u\right>\right> = \frac{1}{4}||\mbox{ad}^*_vu + \mbox{ad}^*_uv||^2 - \left< \left<\mbox{ad}^*_uu,\mbox{ad}_v^*v\right>\right>$$
$$ -\frac{3}{4} ||\mbox{ad}_uv||^2 + \frac{1}{2}\left<\left<\mbox{ad}_uv,\mbox{ad}_v^*u - \mbox{ad}_u^*v\right>\right>.$$ 

The normalized sectional curvature is given by

\begin{equation}\label{normcurv}
K(u,v) = \frac{\overline{K}(u,v)}{||u||||v|| - \left<\left<u,v\right>\right>^2}.
\end{equation}
Khesin et al. \cite{Khesin13} computed $\overline{K}$ explicitly for homogeneous Sobolev metrics on $\mathcal{D}_{\mu,ex}(\mathbb{T}^2)$ for vector fields of the form $u = \nabla^\perp \cos(jx+ky)$ and $v = \nabla^\perp \cos(lx+my)$. Here, one may consider the Lie algebra to be $C^\infty(\mathbb{T}^2)$. Then, given a metric $\left<\left<,\right>\right>$ on $\mathcal{D}_{\mu,ex}(\mathbb{T}^2)$ we obtain an inner product on $C^\infty(\mathbb{T}^2)$ given by

$$\left<\left<u,v\right>\right> = \left<\left<\right.\right.\nabla^\perp f,\nabla^\perp g\left.\left.\right>\right>=\int_{\mathbb{T}^2}f\Lambda gd\mu = \left<\left<f,g\right>\right>,$$

for some positive definite, symmetric operator $\Lambda$. For our purposes we will have $F(p)=\sqrt{j^2+k^2}$ where $F$ is the symbol of $\Lambda$ and $p=(j,k)$. 

\begin{proposition}[Khesin et al. \cite{Khesin13}]

Suppose $f(x,y) = \cos(jx+ky)$ and $g(x,y) = \cos(lx+my)$ where $j,k,l,m$ are integer multiples of $2\pi$. Set $p = (j,k)$ and $q=(l,m)$, and let $u = \nabla^\perp f$ and $v = \nabla^\perp g$. Then the non-normalized sectional curvature is given by

\begin{equation}\label{nonnormcurv}
\overline{K}(u,v) = \frac{|p\wedge q|^2}{8}\left\{ \frac{1}{4}\left(F(p)-F(q)\right)^2\left(\frac{1}{F(p+q)}+\frac{1}{F(p-q)}\right)\right.
\end{equation}
$$\left.-\frac{3}{4} \left( F(p+q)+F(p-q)\right)+F(p)+F(q) \right\},$$

where $p\wedge q = jm-kl$.
\end{proposition}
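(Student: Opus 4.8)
The plan is to turn \eqref{nonnormcurv} into a short Fourier computation on the Lie algebra. Identifying $\mathfrak{g}$ with the mean-zero stream functions $C^\infty(\mathbb{T}^2)$ and writing $u=\nabla^\perp f$, $v=\nabla^\perp g$ with $f=\cos(p\cdot x)$, $g=\cos(q\cdot x)$, we have $\mbox{ad}_uv=\{g,f\}$, and the same integration by parts used to prove that SQG is a geodesic equation (now with the inertia operator $\Lambda$ in place of $A$) gives $\mbox{ad}^*_uv=-\Lambda^{-1}\{\Lambda g,f\}$. The decisive point is that $\Lambda$ is the Fourier multiplier with symbol $F$, hence diagonal on plane waves, $\Lambda\cos(r\cdot x)=F(r)\cos(r\cdot x)$, with $\Lambda^{-1}$ well defined on every nonzero mode. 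So everything reduces to understanding one Poisson bracket.

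That bracket is $\beta:=\{g,f\}=\langle\nabla g,\nabla^\perp f\rangle=(p\wedge q)\sin(p\cdot x)\sin(q\cdot x)=\tfrac12(p\wedge q)\bigl(\cos((p-q)\cdot x)-\cos((p+q)\cdot x)\bigr)$ by the product-to-sum identity. Thus every vector entering Arnold's formula is a combination of the two pure modes $\cos((p\pm q)\cdot x)$, with coefficients built from $p\wedge q$ and from $F(p),F(q),F(p\pm q)$. Two simplifications follow at once: $\mbox{ad}^*_uu=-F(p)\Lambda^{-1}\{f,f\}=0$ and likewise $\mbox{ad}^*_vv=0$, so the term $-\langle\langle\mbox{ad}^*_uu,\mbox{ad}^*_vv\rangle\rangle$ in $\overline{K}$ disappears (a single plane-wave stream function is a steady solution of the Euler--Arnold equation); and $\mbox{ad}^*_vu+\mbox{ad}^*_uv=(F(p)-F(q))\Lambda^{-1}\beta$, $\mbox{ad}^*_vu-\mbox{ad}^*_uv=(F(p)+F(q))\Lambda^{-1}\beta$, while $\mbox{ad}_uv=\beta$.

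It then remains to evaluate the three surviving terms by orthogonality. For $p,q$ linearly independent, $\cos((p+q)\cdot x)$ and $\cos((p-q)\cdot x)$ are distinct nonzero Fourier modes, hence mutually orthogonal both in $L^2$ and in $\langle\langle\phi,\psi\rangle\rangle=\int_{\mathbb{T}^2}\phi\,\Lambda\psi\,d\mu$, with $\|\cos(r\cdot x)\|_{L^2}^2=\tfrac12$ and $\|\cos(r\cdot x)\|^2=\tfrac12 F(r)$ in the standard normalization of $\mathbb{T}^2$. Expanding $\beta$ and $\Lambda^{-1}\beta$ in these two modes and substituting into $\tfrac14\|\mbox{ad}^*_vu+\mbox{ad}^*_uv\|^2$, into $-\tfrac34\|\mbox{ad}_uv\|^2$, and into $\tfrac12\langle\langle\mbox{ad}_uv,\mbox{ad}^*_vu-\mbox{ad}^*_uv\rangle\rangle$ (using $\langle\langle\beta,\Lambda^{-1}\beta\rangle\rangle=\|\beta\|_{L^2}^2=\tfrac14(p\wedge q)^2$ for the last), the three contributions come out to $\tfrac{|p\wedge q|^2}{8}\cdot\tfrac14(F(p)-F(q))^2\bigl(\tfrac1{F(p+q)}+\tfrac1{F(p-q)}\bigr)$, $\tfrac{|p\wedge q|^2}{8}\cdot\bigl(-\tfrac34\bigr)(F(p+q)+F(p-q))$, and $\tfrac{|p\wedge q|^2}{8}(F(p)+F(q))$ respectively; their sum is exactly \eqref{nonnormcurv}.

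The only real obstacle is the bookkeeping together with the degenerate configurations $p=\pm q$: there $F(p\mp q)=F(0)=0$, so $\Lambda^{-1}$ fails to be defined on the offending constant mode and the factor $1/F(p\mp q)$ in \eqref{nonnormcurv} is formally infinite. But in exactly these cases $p\wedge q=0$, so $\beta\equiv 0$, every term of $\overline{K}$ vanishes, and \eqref{nonnormcurv} is to be read with the convention $0\cdot\infty=0$; one therefore carries out the computation for $p\wedge q\neq 0$ (where all the Fourier multipliers are genuinely invertible) and disposes of the collinear case separately by the vanishing of $p\wedge q$.
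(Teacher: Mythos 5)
Your derivation is correct, but note that the paper itself offers no proof of this proposition: it is quoted verbatim from Khesin--Lennells--Misio\l{}ek--Preston \cite{Khesin13}, so there is no in-paper argument to compare against. Your computation is essentially the standard one underlying that reference: plug the plane-wave stream functions into Arnold's curvature formula, use $\mbox{ad}_uv=\{g,f\}=\tfrac12(p\wedge q)\bigl(\cos((p-q)\cdot x)-\cos((p+q)\cdot x)\bigr)$ and $\mbox{ad}^*_\psi\phi=-\Lambda^{-1}\{\Lambda\phi,\psi\}$ (the torus analogue of the $\mbox{ad}^*$ computation in Section 2, with $\Lambda$ in place of $A$), and evaluate the four terms by orthogonality of the modes $\cos((p\pm q)\cdot x)$. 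I checked the details against the paper's sign conventions ($\mbox{ad}_\psi\nu=\{\nu,\psi\}$) and the normalization forced by $j,k,l,m\in 2\pi\mathbb{Z}$ (unit-volume torus, $\|\cos(r\cdot x)\|_{L^2}^2=\tfrac12$): the identities $\mbox{ad}^*_uu=\mbox{ad}^*_vv=0$, $\mbox{ad}^*_vu\pm\mbox{ad}^*_uv=(F(p)\mp F(q))\Lambda^{-1}\beta$ (with the signs as you state them), and the three surviving contributions all come out exactly as claimed, summing to \eqref{nonnormcurv}. Your treatment of the collinear case is also the right way to read the formula when $F(p\pm q)$ vanishes: there $p\wedge q=0$ kills every term. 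So the proposal stands as a correct, self-contained verification of the cited formula rather than an alternative to anything in this paper.
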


Normalizing the above formula to obtain the usual sectional curvature we have the following:

\begin{theorem}

The sectional curvature of $\mathcal{D}_{\mu,ex}(\mathbb{T}^2)$ in the $\dot{H}^{-1/2}$ metric is unbounded of both signs.

\end{theorem}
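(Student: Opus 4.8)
The plan is to exploit the explicit formula \eqref{nonnormcurv} of Khesin et al. and exhibit two families of wave-vector pairs $(p,q)$ along which the normalized curvature \eqref{normcurv} diverges to $+\infty$ and to $-\infty$ respectively. Recall that with $F(p) = |p|$ the $\dot H^{-1/2}$ metric gives $\|u\|^2 = \left<\left<f,f\right>\right> = \tfrac12 F(p) = \tfrac12|p|$ (and similarly for $v$), while the orthogonality $\left<\left<u,v\right>\right>=0$ holds whenever $p\neq \pm q$; so the denominator in \eqref{normcurv} is essentially $\tfrac14 |p|\,|q|$. Hence
$$
K(u,v) \;\sim\; \frac{|p\wedge q|^2}{2\,|p|\,|q|}\left\{ \tfrac14(|p|-|q|)^2\!\left(\tfrac{1}{|p+q|}+\tfrac{1}{|p-q|}\right) - \tfrac34\bigl(|p+q|+|p-q|\bigr) + |p|+|q| \right\},
$$
and the whole game is to choose $p,q$ making the braced quantity large in absolute value while controlling the prefactor $|p\wedge q|^2/(|p|\,|q|)$.

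First I would get the \emph{negative} direction, which should be the easier of the two. Take $p$ and $q$ nearly parallel and of very different lengths — say $q$ fixed and short, $p$ long and almost aligned with $q$ but tilted just enough that $p\wedge q$ is a fixed nonzero integer (this is possible since the lattice is $2\pi\mathbb{Z}^2$; one keeps $|p\wedge q|$ bounded below while $|p|\to\infty$). Then $|p\pm q|\approx |p|$, the term $\tfrac14(|p|-|q|)^2(\tfrac{1}{|p+q|}+\tfrac{1}{|p-q|}) \approx \tfrac12 |p|$, the term $-\tfrac34(|p+q|+|p-q|)\approx -\tfrac32|p|$, and $|p|+|q|\approx |p|$, so the brace is $\approx -\tfrac12\sqrt{2}\cdot$(lower order)$\,\cdot |p|\to$ a negative multiple of $|p|$ — actually one must track constants carefully, but the leading behaviour is a negative constant times $|p|$. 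Since the prefactor $|p\wedge q|^2/(2|p||q|)$ behaves like $\mathrm{const}/|p|$, this looks like it tends to a negative constant, not $-\infty$; so I would instead let $|p\wedge q|$ grow like $|p|^{1/2+\epsilon}$ (still much smaller than $|p|$, so the near-parallel estimates survive), giving $K \sim -|p|^{2\epsilon}\to -\infty$. One has to check that $|p+q|,|p-q|$ remain comparable to $|p|$ under this slightly larger tilt, which is a routine estimate.

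Next the \emph{positive} direction. Here I would make $|p|=|q|$, so the $(|p|-|q|)^2$ term drops out entirely, and try to make $-\tfrac34(|p+q|+|p-q|) + 2|p|$ positive and large. With $|p|=|q|=r$ and angle $2\alpha$ between them, $|p+q| = 2r\cos\alpha$ and $|p-q| = 2r\sin\alpha$, so the brace is $r\bigl(2 - \tfrac32(\cos\alpha+\sin\alpha)\bigr)$, which is positive once $\cos\alpha+\sin\alpha < 4/3$, e.g. $\alpha$ near $0$ or near $\pi/2$. Take $\alpha$ small: then $|p-q|$ is small (of order $r\alpha$), while $|p\wedge q| = r^2\sin 2\alpha \approx 2r^2\alpha$. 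The prefactor is $\approx (2r^2\alpha)^2/(2r^2) = 2r^2\alpha^2$ and the brace is $\approx r(2-\tfrac32) = \tfrac12 r$, so $K \approx r^3\alpha^2$. Now I choose a sequence with $p\neq\pm q$, $|p|=|q|=r\to\infty$ and $\alpha\to 0$ just slowly enough that $r^3\alpha^2\to\infty$ (e.g. $\alpha\sim r^{-1}$, legal on the lattice by picking $p,q$ symmetric about an axis and close together), and also fast enough that $|p-q|$ stays bounded below by the lattice minimum so the formula is valid and the $1/|p-q|$ term in the dropped piece is finite. That gives $K\to+\infty$.

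The main obstacle I anticipate is purely arithmetic/number-theoretic rather than analytic: one needs, for each large $r$, actual lattice points $p,q\in 2\pi\mathbb{Z}^2$ with $|p|=|q|$ (or very nearly so) and prescribed small angular separation, while simultaneously controlling $|p\wedge q|$, $|p+q|$, $|p-q|$ — and with $p\neq\pm q$ so that the curvature formula's hypotheses and the nondegeneracy of the normalizing denominator hold. Constructing such explicit sequences (for instance via Pythagorean-type parametrizations, or via pairs $p=(a,b)$, $q=(b,a)$ with $a-b$ small) and verifying the asymptotics is the one place real care is required; the rest is substitution into \eqref{nonnormcurv} and \eqref{normcurv} and keeping track of leading-order terms.
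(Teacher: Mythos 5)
Your positive-curvature family is fine: with $|p|=|q|=r$ and small angle the braced quantity in \eqref{nonnormcurv} is $\approx r/2>0$, and the lattice realization you worry about is trivial --- take for instance $p=2\pi(n,1)$, $q=2\pi(n,-1)$, so $|p-q|$ is a fixed lattice vector, which gives $K\sim n\to+\infty$. (The paper does this half even more simply, plugging the fixed scaled configuration $p=2\pi n(1,0)$, $q=2\pi n(1,1)$ into \eqref{nonnormcurv} and \eqref{normcurv} to get $K\approx 4.3\,n^3$; no asymptotic balancing or number theory is needed.)

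The negative-curvature half, however, has a genuine gap. For $q$ fixed and $p$ long and nearly parallel to $q$, the three contributions you list are $\tfrac12|p|$, $-\tfrac32|p|$ and $|p|+|q|$, whose $O(|p|)$ parts cancel \emph{identically}; your conclusion that the leading behaviour of the brace is a negative constant times $|p|$ is therefore false. Carrying the expansion one order further (or simply evaluating at exact parallelism, where the brace equals $|q|^2/(|p|+|q|)>0$) shows that in your regime $|p\wedge q|\lesssim|p|^{1/2+\epsilon}$ the brace is \emph{positive}, of size $O(|q|^2/|p|)$, so the normalized curvature tends to $0^+$ rather than to $-\infty$. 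Even letting the tilt grow all the way to perpendicularity, with $|q|$ fixed the brace is only $\approx -|q|^2/(2|p|)$ and $K$ stays bounded (it tends to $-|q|^3/4$ up to the normalization constant). To drive $K\to-\infty$ you must let \emph{both} $|p|$ and $|q|$ grow in a genuinely transverse configuration: e.g.\ the paper's choice $p=(2\pi n,0)$, $q=(0,2\pi n)$, for which the $(F(p)-F(q))^2$ term vanishes, the brace equals $\bigl(2-\tfrac{3\sqrt{2}}{2}\bigr)2\pi n<0$, and the prefactor grows like $n^2$, yielding $K\approx -15\,n^3\to-\infty$. Note this is exactly the equal-length family you already set up for the positive sign, but with the half-angle $\alpha=\pi/4$ (where $\cos\alpha+\sin\alpha=\sqrt2>4/3$) instead of $\alpha$ small; with that repair both signs follow from fixed integer directions scaled by $n$, and the arithmetic issue you flag disappears.
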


\begin{proof}

Let $n\in \mathbb{N}$. First we choose $j=m=2\pi n$ and $k=l=0$. Then using \eqref{normcurv} and \eqref{nonnormcurv} we obtain

$$K(\cos(2n\pi x),\cos(2n\pi y))  \approx-15.0\;n^3,$$

which demonstrates that the sectional curvature can be made to be arbitrarily negative for arbitrarily large $n$.\\
\\
Next we choose $j=m=l=n$ and $k=0$. Then

$$K(\cos(2n\pi x),\cos(2n\pi x+2n\pi y)) \approx 4.3\;n^3,$$

which demonstrates that the curvature can be made to be arbitrarily positive for large $n$.
\end{proof}

\section{Conclusion}
Here we've shown how SQG \eqref{SQG} is the geodesic equation on $\mathcal{D}_\mu(M)$ in the $\dot{H}^{-1/2}$ metric. We've also analyzed some of the basic geometric properties of this manifold: its Riemannian exponential map is smooth and not Fredholm, and its curvature is unbounded of both signs. We saw previously how SQG \eqref{SQG} comes about by considering the $\dot{H}^{1/2}$ metric on stream functions. This may imply that the SQG equation has similarities to the Wunsch equation, which is the geodesic equation on $\mathcal{D}(\mathbb{S}^1)$ in the $\dot{H}^{1/2}$ metric, as was shown by Wunsch \cite{Wunsch10}. This is of importance for the study of the SQG equation because the Wunsch equation blows up, as was shown by Bauer-Kolev-Preston \cite{Bauer15}, and blow-up for the SQG equation is a long-standing open problem.   This is also of importance for infinite dimensional geometry as it shows that negative index Sobolev metrics on diffeomorphism groups can give rise to relevant geodesic equations. As the properties of negative index Sobolev metrics are generally not well known, this will be a fruitful area of further study. There are also many things more to do specifically on $\mathcal{D}_\mu(M)$ in the $\dot{H}^{-1/2}$ metric. For example, as above, we have conjectured that the unbounded curvature implies vanishing geodesic distance (as is discussed by Michor-Mumford \cite{Michor05}). We may also ask whether conjugate points can be concretely linked to possible blow up points, as is done by Preston \cite{Preston10}. Here we've demonstrated that SQG \eqref{SQG} has many geometric similarities to other equations for which blow up is known, such as the Wunsch equation, or unknown, such as 3D Euler. For example, the exponential map associated to each of these equations is non-Fredholm. This provides an important perspective on this poorly understood situation.

\textsc{Department of Mathematics, University of Colorado, Boulder CO, 80309-0395, USA}\\
Email address: \bf{pearce.washabaugh@colorado.edu}

\end{document}